\newtheorem{Theorem}{Theorem}[section]
\newtheorem{Corollary}[Theorem]{Corollary}
\newtheorem{Lemma}[Theorem]{Lemma}
\theoremstyle{definition}
\newtheorem{Definition}[Theorem]{Definition}
\newtheorem{Notation}[Theorem]{Notation}
\theoremstyle{remark}
\newtheorem{Remark}[Theorem]{Remark}
\DeclareMathOperator{\Spec}{Spec}
\DeclareMathOperator{\Gr}{Gr} 
\DeclareMathOperator{\Fl}{Fl}
\DeclareMathOperator{\SB}{SB}
\DeclareMathOperator{\Sch}{Sch} 
\DeclareMathOperator{\Sets}{Sets}
\DeclareMathOperator{\Weyl}{\mathcal{K}}
\DeclareMathOperator{\Schur}{\mathcal{L}}
\DeclareMathOperator{\Tautological}{S}
\DeclareMathOperator{\Quotient}{Q}
\DeclareMathOperator{\Sym}{Sym}
\DeclareMathOperator{\Divided}{Div}
\DeclareMathOperator{\E}{E}
\DeclareMathOperator{\F}{F}
\DeclareMathOperator{\Line}{L}
\DeclareMathOperator{\VectorV}{V}
\DeclareMathOperator{\VectorW}{W}
\DeclareMathOperator{\Affine}{\mathbb{A}}
\DeclareMathOperator{\Integers}{\mathbb{Z}}
\DeclareMathOperator{\Identity}{Id}
\DeclareMathOperator{\Right}{\mathsf{R}}
\DeclareMathOperator{\Left}{\mathsf{L}}
\DeclareMathOperator{\Dqc}{D_{qc}}
\DeclareMathOperator{\RHom}{\mathsf{R}Hom}
\DeclareMathOperator{\RHomi}{\mathsf{R}^iHom}
\DeclareMathOperator{\RsheafHom}{\mathsf{R}\mathcal{H}om}
\DeclareMathOperator{\IM}{Im}
\DeclareMathOperator{\D}{D}
\DeclareMathOperator{\M}{M}
\DeclareMathOperator{\Cohomology}{H}
\DeclareMathOperator{\Br}{Br}
\newcommand{\Gm}{{\mathbb G}_m}
\let\c@equation\c@Theorem
\numberwithin{equation}{section}
\title[Generalized Severi-Brauer]{Semiorthogonal decompositions for Generalized Severi-Brauer Schemes}
\author{Ajneet Dhillon}
\email{adhill3@uwo.ca}
\author{Sayantan Roy-Chowdhury}
\email{sroycho4@uwo.ca}
\begin{document}
\maketitle

\begin{abstract}
    The purpose of this paper is to use conservative descent to study semi-orthogonal decompositions 
    for some homogeneous varieties over general bases.
    We produce a semi-orthogonal decomposition for the bounded derived category of coherent sheaves 
    on a generalized Severi-Brauer scheme. This extends known results for Sever-Brauer varieties and 
    Grassmanianns. We use our results to construct semi-orthogonal decompositions for flag varieties over arbitrary 
    bases. This generalises a result of Kapranov. 
\end{abstract}

\tableofcontents

\section{Introduction}
In this article,
we study semi-orthogonal decompositions for Grassmannians and flag varieties over general bases, see \ref{Theorem Derived Category of Grassmannian over algebraic stack for Dqc}, 
\ref{Theorem Decomposition of Derived Category of Generalised Severi Brauer Varieties} and \ref{Corollary Semiorthogonal Decomposition of Flags}. These 
notions are recalled in section \ref{s:dc}.

These problems have a long history. The first result in this area was Beilinson's theorem, \cite{beilinson}, which showed that ${\mathbb P}^n$ has a full exceptional collection.
Later Kapranov, see \cite{Kapranov1}, proved a similar result for a Grassmannian over a field of characteristic zero. Kapranov's calculation makes use of the Borel-Bott-Weil
theorem which is necessarily a characteristic zero result. For Grassmanianns over an arbitrary fields, Buchwitz, Leuschke and Van den Bergh, 
see \cite{Grassmannian-arbitrary-characteristic},
have shown that such a Grassmannian has a full exceptional collection. This result was further strengthed by Efimov, see \cite{efimov}, who showed that such a result 
holds over the integers. The problem has been further studied in the relative setting. For projective and Grassman bundles in characteristic zero an exceptional 
collection is produced in \cite{orlov}.

Recently, arithmetic consequences of decompositions of the derived category have been observed, see \cite{AB} and \cite{bdm}. This leads naturally to the question of 
constructing decompositions for twisted forms of varieties. A Severi-Brauer scheme is a twisted form of a projective bundles. 
Semi-orthogonal decompositions for  Severi-Brauer 
schemes are constructed  in \cite{bernardara}. In characteristic zero, refinements, such as tilting sheaves are discussed in \cite{blunk}. 
Generalised Severi-Brauer schemes are twisted forms of Grassmannians. Over fields of characteristic zero, semi-orthogonal decompositions
for generalised Severi-Brauer varieties are studied in \cite{baek}. 

In this paper, we will add to this body of work. Our main tool is the notion of conservative descent introduced by 
Bergh and Schn\"{u}rer, see \cite{Bergh-Schnurer-Conservative-Descent}. This allows one to descend 
decompositions along conservative covers. They have applied their theory to gerbes and Severi-Brauer schemes (stacks), see \cite{Bergh-Schurer-Decomposition-Of-Gerbes},
to produce semi-orthogonal decompositions. We will review their results in \ref{ss:conservative}. The methods and ideas of Bergh and Schn\"{u}rer are 
essential for this paper.

In section \ref{s:grassmanian} we apply the machinery of 
\cite{Bergh-Schnurer-Conservative-Descent} and \cite{Grassmannian-arbitrary-characteristic}
to a Grassmannian, potentially over the integers, to recover one of the results in \cite{efimov} via different means.
In the final section we are able to extend this to a flag variety over the integers, extending the result in \cite{Kapranov2}. From our understanding, 
this result is new and other authors have not considered flags over the integers.
In \ref{s:gsb},  we are able to produce decompositions for  generalised Severi-Brauer stacks which is a vast extension of the main results of \cite{baek}.
The last section of the paper, explains why semi-orthogonal decompositions, rather than exceptional collections, are the best available decompositions for 
twisted forms, see \ref{Corollary Generalised Severi Brauer Over a Field}.

\section{Notation}

Let $k$ and $n$ be integers satisfying $0$ $<$ $k$ $<$ $n$ and let $[n]$ denote the set ${1,2,\hdots,n}$. We denote by $\Sch$ the category of schemes
and by $\Sets$ the catgory of sets. We will usually reserve the letters $K$ for a field, $X$, $Y$ for stacks, $\Spec(R)$ for affine
schemes, $\E$ and $\F$ for sheaves, $\VectorV$ and $\VectorW$ for vector bundle, $\Line$ for line bundle, $\Tautological$ and $\Quotient$ for 
tautological and the quotient bundle respectively. The Schur and Weyl functors are denoted by $\Schur^{\alpha}$ and $\Weyl^{\alpha}$ respectively.

\section{The Grassmannian}
\label{Section Grassmannian}

\begin{Definition}
    \label{Definition Grassmannian Functor}
    Given a vector bundle $\VectorV$ over an algebraic stack $X$, we define a functor from the category of schemes over 
    $X$ to the category of sets
    \[     Gr(k,\VectorV)  : \Sch/X  \rightarrow \Sets          \]
    that associates to a scheme $T$ over $X$ 
    \[ p : T \rightarrow X \]
    the isomorphism classes of surjections
    \[    f : p^{*}(\VectorV) \twoheadrightarrow \Quotient \]
    where  $\Quotient$ is a finite locally free $\mathcal{O}_T$-module of rank $n - k$. In the case $\VectorV$ is free of rank $n$,
    we denote this by $Gr(k,n)$.
\end{Definition}

\begin{Theorem}
\label{Representability of the Grassmannian functor}
 Let $P_{n-k}$ be the set of subsets of $[n]$ of cardinality $n - k$. Then the functor $Gr(k,n)$ defined in
\ref{Definition Grassmannian Functor} has an open cover by representable open subfunctors $\{F_I\}_{I \in P_{n-k}}$. Thus,
it is representable by a scheme.
\end{Theorem}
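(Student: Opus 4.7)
The plan is to give the classical construction: for each $I \in P_{n-k}$, let $\iota_I \colon \mathcal{O}_T^{I} \hookrightarrow \mathcal{O}_T^{n}$ be the inclusion of the summands indexed by $I$, and define
\[
F_I(T) \;=\; \bigl\{\, f \colon \mathcal{O}_T^{n} \twoheadrightarrow \Quotient \ \text{in } Gr(k,n)(T) \ \big| \ f \circ \iota_I \text{ is an isomorphism}\,\bigr\}.
\]
The first step is to check that $F_I$ is representable by affine space. Given an object of $F_I(T)$, since $f\circ\iota_I$ is an isomorphism, the target $\Quotient$ is canonically identified with $\mathcal{O}_T^{I}$, and then $f$ is determined by its restriction to the complementary summand $\mathcal{O}_T^{[n]\setminus I}$, which is an arbitrary $\mathcal{O}_T$-linear map $\mathcal{O}_T^{[n]\setminus I}\to\mathcal{O}_T^{I}$. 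Thus $F_I$ is representable by $\mathbb{A}^{k(n-k)}_{\mathbb{Z}}$.

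The second step is to verify that $F_I \hookrightarrow Gr(k,n)$ is an open subfunctor. Given any $T$-point $f \colon \mathcal{O}_T^n \twoheadrightarrow \Quotient$ of $Gr(k,n)$, the composition $f\circ\iota_I$ is a morphism between finite locally free sheaves of the same rank $n-k$ on $T$. The locus where such a map is an isomorphism is the complement of the vanishing of the determinant of a local matrix representative, hence an open subscheme $U_I \subset T$; a map $T'\to T$ factors through $U_I$ precisely when the pulled back restriction is an isomorphism, which is exactly the condition defining $F_I$.

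The third step is to verify that the $\{F_I\}_{I\in P_{n-k}}$ jointly cover $Gr(k,n)$; equivalently, that $\bigcup_I U_I = T$ for every $T$-point as above. This is a local question, so one may pass to a cover on which $\Quotient$ is free, and then the surjection $f$ restricts to a surjection $\mathcal{O}_T^n \twoheadrightarrow \mathcal{O}_T^{n-k}$. At any point $t\in T$, by Nakayama applied to the residue field, some subset $I \subset [n]$ of cardinality $n-k$ has the property that the corresponding $(n-k)\times(n-k)$ minor is invertible at $t$, hence in a neighborhood of $t$ by openness of the nonvanishing locus of a section; thus $t\in U_I$.

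Given these three steps, $\{F_I\}$ is an open cover of $Gr(k,n)$ by representable subfunctors, and so $Gr(k,n)$ is itself representable (it is a sheaf in the Zariski topology by descent for morphisms of quasi-coherent sheaves, and the cover glues its representing schemes). The main obstacle I anticipate is purely bookkeeping in the openness step: one must exhibit the open subscheme $U_I$ intrinsically and verify the universal property for arbitrary maps $T'\to T$, rather than only on points. This is handled by expressing the condition ``isomorphism'' for a map of locally free sheaves of equal rank as the invertibility of its determinant, which is manifestly represented by a principal open.
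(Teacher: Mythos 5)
Your proof is correct and follows essentially the same route as the source the paper cites (the Stacks Project Lemma 089T): the same open subfunctors $F_I$, their identification with affine space via the matrix of $f$ restricted to the complementary summand, openness via nonvanishing of a determinant, and covering via Nakayama at residue fields. The paper's own ``proof'' is just the citation, and its Remark \ref{Remark Description of Open Sets of Grassmannian} records precisely the $U_I$ you construct.
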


\begin{proof}
    \cite[\href{https://stacks.math.columbia.edu/tag/089T}{Lemma 089T}]{stacks-project}.
\end{proof}

The scheme $\Gr(k,n)$ representing the functor $Gr(k,n)$ over $\Spec(\Integers)$ is called the Grassmannian over the integers.
We denote its base change to an affine scheme $\Spec(R)$ (resp. algebraic stack $X$) as $\Gr(k,n)_{R}$ (resp. $\Gr(k.n)$). 
The universal quotient bundle $\Quotient$ over $\Gr(k,n)_{X}$ fits into the tautological exact sequence
\[ 0 \rightarrow \Tautological \rightarrow \mathcal{O}_{\Gr(k,n)_{X}}^{\oplus n} \rightarrow \Quotient \rightarrow 0. \]
More generally, the functor $Gr(k,\VectorV)$ is also representable and we denote the corresponding algebraic stack
over $X$ as $\Gr(k,\VectorV)$, it also has a universal quotient bundle $\Quotient$ over it that fits into an exact sequence 
\[ 0 \rightarrow \Tautological \rightarrow \mathcal{O}_{\Gr(k,\VectorV)_{X}}^{\oplus n} \rightarrow \Quotient \rightarrow 0. \]

\begin{Remark}
\label{Remark Description of Open Sets of Grassmannian}
The proof in \cite[\href{https://stacks.math.columbia.edu/tag/089T}{Lemma 089T}]{stacks-project}  reveals that
the open subfunctors $F_I$ can be represented by open subsets $U_I$ of $\Gr(k,n)$ isomorphic to the affine space 
$\Affine_{\Integers}^{k(n-k)}$. We now give a concrete description of the open sets after fixing a basis
 $\{e_1,e_2,\hdots,e_n\}$ of $\mathcal{O}_{\Gr(k,n)}^{\oplus n}$.  The open sets $U_I$ can then be described as subsets of the
Grassmannian $\Gr(k,n)$ where the restriction of the quotient map $f$ : $\mathcal{O}_{\Gr(k,n)}^{\oplus n}$ $\rightarrow$
$\Quotient$ to the subbundle generated by $\{e_{i}\}_{i \in I}$ is surjective. Furthermore, suppose 
\[ I = \{i_1, i_2, \hdots i_{n-k}\}   \hspace{5em}       [n]\setminus I = \{j_1, j_2, \hdots j_k\}    \]
such that both the sequences are increasing. Then given affine coordinates 
$\{x_{lm}\}$ for $l \in [k]$ and $m \in [n-k]$ on a point $x$ in $U_I$, the set
\[            \{e_{j_l} + \sum_{m \in [n-k]} x_{lm} e_{i_m}\}_{l \in [k]}                              \]
forms a basis for the tautological bundle $\Tautological$. The dual of the Quotient bundle $\Quotient^{\vee}$ on the other hand
has a basis given by 
\[             \{  e_{i_m}^* - \sum_{l \in [k]} x_{lm} e^*_{j_l}\}_{m \in [n-k]}    \]
where $\{e^*_i\}_{i \in [n]}$ is the basis of  $(\mathcal{O}_{\Gr(k,n)}^{\oplus n})^{\vee}$ satisfying
\[ e_i^*(e_j) = \delta_{ij}.  \]
We can base change and assume the same description for corresponding open sets $U_i$ of $\Gr(k,n)_R$. 
\end{Remark}

\section{Derived Categories}\label{s:dc}
\subsection{Triangulated Category}
We fix $K$ to be a field. What follows are the definitions of exceptional sequence, admissible category
and semiorthogonal decomposition of a triangulated category $\mathcal{T}$.

\begin{Definition}
\label{Definition Classical Generation of Triangulated Catgory}
We say a family of objects $E_{1}, E_{2} \hdots E_{n}$ in a triangulated category $\mathcal{T}$ classically generates
$\mathcal{T}$  if the smallest strictly full, thick triangulated subategory containing these objects is $\mathcal{T}$ itself.
\end{Definition} 

\begin{Definition}
\label{Definition Exceptional Sequence}
		A sequence of objects $E_{1}, E_{2} \hdots E_{n}$ of a $K$-linear triangulated category $\mathcal{T}$
        is defined to be exceptional if 
		\begin{enumerate}
			\item They classically generate $\mathcal{T}$
		    \item \[\RHom (E_{i},E_{j}) = 0       \]
	for j $<$ i
	        \item  \[\RHom (E_{i},E_{i}) = K .      \]
        \end{enumerate}
\end{Definition}

\begin{Definition}
\label{Definition Admissible Category}
A full triangulated subcategory $\mathcal{A}$ of $\mathcal{T}$ is defined to be right(left) admissible if the inclusion functor
from $\mathcal{A}$ to $\mathcal{T}$ admits a right(left) adjoint.  
\end{Definition}

\begin{Definition}
\label{Definition Semiorthogonal Decomposition}
    A semiorthogonal decomposition of $\mathcal{T}$ is a sequence of strictly full triangulated subcategories
    $\mathcal{T}_1$, $\mathcal{T}_2$, $\mathcal{T}_3$, $\hdots$, $\mathcal{T}_n$ such that
    \begin{enumerate}
        \item For all $1 \leq i < j \leq n$ and objects $T_i$, $T_j$ in $\mathcal{T}_i$ and $\mathcal{T}_j$ respectively,
        $\RHom (T_{i},T_{j}) = 0$. (This sequence is known as a semiorthogonal sequence)
        \item The smallest strictly full triangulated subcategory of $\mathcal{T}$ containing  $\{\mathcal{T}_1$, $\mathcal{T}_2$,
        $\mathcal{T}_3$, $\hdots$, $\mathcal{T}_n\}$ is $\mathcal{T}$.
    \end{enumerate}
    We write 
    \[ \mathcal{T} = \langle \mathcal{T}_1, \mathcal{T}_2, \mathcal{T}_3, \hdots, \mathcal{T}_n \rangle \]
    in this case.
\end{Definition}

\subsection{Derived Category of Sheaves and Conservative Descent} \label{ss:conservative}

We will first introduce several notions of derived categories over an algebraic stack.

\begin{Notation}
\label{Notation Derived Categories of sheaves}
    Given an algebraic stack $X$, we will work with the following derived categories
     \begin{enumerate}
        \item $\D(X)$ denotes the unbounded derived category of $\mathcal{O}_X$-modules over the $lisse$-$etale$ site.
        \item  Let $\Dqc(X)$ denotes the full subcategory of $\D(X)$
    with quasi-coherent cohomology sheaves.
        \item $\D_{p}(X)$ denotes the full subcategory of perfect complexes.
        \item $\D_{pc}(X)$ denotes the full subcategory of pseudo-coherent complexes.
        \item $\D^{lb}_{pc}(X)$ is the full subcategory of pseudo-coherent complexes with locally bounded cohomology.
        \item $\D_{sg}(X)$ denotes the singularity category which is the Verdier quotient $\D^{lb}_{pc}(X)/\D_{p}(X)$.
    \end{enumerate}
\end{Notation}

We now adapt the definition of Fourier-Mukai Transforms 
\cite[Definition 3.3]{Bergh-Schnurer-Conservative-Descent}  to our case where all the morphisms are quasi-compact, quasi-separated
and representable. First, we
fix some notation. Given a quasi-compact and quasi-separated representable morphism $f$: $X$ $\rightarrow$ $Y$ of  algebraic stacks,
we denote by 
\[      \Right\!f_* : \Dqc(X) \rightarrow \Dqc(Y)                                                            \]
\[      \Left\!f^* : \Dqc(Y) \rightarrow \Dqc(X)                                                            \]                                
the derived functor corresponding to pushforward and pullback respectively. The functor $\Right\!f_*$ has a right adjoint 
which we denote by 
\[      f^{\times} : \Dqc(Y) \rightarrow \Dqc(X). \]

\begin{Definition}
\label{Definition Fourier Mukai Transform}
    A \emph{finite Fourier-Mukai transform of algebraic stacks}  a triple $(p,q,\E)$ where 
    $p:G\to Z$ and $q:G\to Y$ are  quasi-compact and quasi-separated representable morphisms of algebraic stacks
    and $\E$ is an object in $\Dqc(G)$
    such that 
    \begin{enumerate}
        \item $p$ and $q$ are proper and perfect.
        \item $q$ is finite and $q^{\times}(\mathcal{O}_Y)$ is perfect. 
    \end{enumerate}
\end{Definition}

\begin{Remark}
    This definition is from \cite[3.3]{Bergh-Schnurer-Conservative-Descent}. In this paper there are further 
    conditions imposed on $p$ and $q$. However the morphisms $p$ and $q$ in \cite{Bergh-Schnurer-Conservative-Descent} are more 
    general than those considered in this paper. The extra conditions are automatically satisfied in the present context, 
    see \cite[3.7]{Bergh-Schnurer-Conservative-Descent}.
\end{Remark}

A finite Fourier-Mukai transform $(p,q,\E)$ induces a functor
\[ \Phi : \Dqc(Z) \rightarrow \Dqc(Y)   \]
\[ \Phi(\F) = \Right\!q_*(\E \otimes \Left\!p^*(\F))             \]
which we call the Fourier-Mukai functor. It
admits a right adjoint \cite[Section 3, Eqn 3.10]{Bergh-Schnurer-Conservative-Descent}
\[ \Phi^* : \Dqc(Y) \rightarrow \Dqc(Z)   \]
\[ \Phi^*(\F) = \Right\!p_*(\E^{\vee} \otimes q^{\times}(\F))             \]
where $\E^{\vee}$ = $\RsheafHom(\E,\mathcal{O}_G)$ denotes the dual of $\E$. This proves the following lemma

\begin{Lemma}
\label{Lemma Essential Images of Fourier Mukai Functors are admissible}
Essential images of fully faithful Fourier-Mukai functors are right admissible.
\end{Lemma}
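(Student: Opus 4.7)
The plan is to invoke a purely categorical fact: if a fully faithful functor $F$ has a right adjoint $G$, then the inclusion of the essential image of $F$ admits $F\circ G$ as a right adjoint. Specializing to the Fourier-Mukai functor $\Phi:\Dqc(Z)\to\Dqc(Y)$ (assumed fully faithful) together with the right adjoint $\Phi^{*}$ already exhibited in the excerpt, this immediately furnishes the right adjoint to the inclusion $i:\mathcal{A}\hookrightarrow\Dqc(Y)$, where $\mathcal{A}$ denotes the essential image of $\Phi$.

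Concretely, I would first record that fully faithfulness of $\Phi$ is equivalent to the unit $\eta:\Identity\to\Phi^{*}\Phi$ of the $(\Phi,\Phi^{*})$-adjunction being a natural isomorphism. I would then define the candidate right adjoint as $R:=\Phi\circ\Phi^{*}:\Dqc(Y)\to\mathcal{A}$, whose values land in $\mathcal{A}$ by construction; the counit $\epsilon:\Phi\Phi^{*}\to\Identity$ supplies a natural transformation $R\to i$. To verify this exhibits $R$ as right adjoint to $i$, I would unfold the chain of natural bijections, valid for any $G=\Phi(Z')\in\mathcal{A}$ and $\F\in\Dqc(Y)$,
\[
\operatorname{Hom}_{\Dqc(Y)}(G,R(\F))=\operatorname{Hom}_{\Dqc(Y)}(\Phi(Z'),\Phi\Phi^{*}(\F))\cong\operatorname{Hom}_{\Dqc(Z)}(Z',\Phi^{*}(\F))\cong\operatorname{Hom}_{\Dqc(Y)}(\Phi(Z'),\F),
\]
where the first isomorphism uses full faithfulness of $\Phi$ and the second is the $(\Phi,\Phi^{*})$-adjunction. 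Naturality in both arguments is inherited from naturality of $\eta$ and $\epsilon$.

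I anticipate no substantive obstacle: the argument is formal adjoint-functor nonsense riding on top of the Fourier-Mukai data already assembled. The only minor bookkeeping is to confirm that $\mathcal{A}$, once replaced by its closure under isomorphism, really is a strictly full triangulated subcategory of $\Dqc(Y)$. Closure under shifts is immediate from $\Phi$ being a $\Delta$-functor, and closure under cones uses full faithfulness: a morphism $f:\Phi(X)\to\Phi(Y)$ lifts uniquely to $g:X\to Y$, and then $\Phi$ carries a distinguished triangle on $g$ to a distinguished triangle on $f$ whose third vertex lies in $\mathcal{A}$.
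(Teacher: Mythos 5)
Your proposal is correct and is exactly the argument the paper relies on: the paper merely records that $\Phi$ admits the right adjoint $\Phi^{*}$ and declares the lemma proved, leaving implicit the standard categorical fact that you spell out (namely that $\Phi\circ\Phi^{*}$ is right adjoint to the inclusion of the essential image when $\Phi$ is fully faithful). Your extra bookkeeping about the essential image being a strictly full triangulated subcategory is a harmless and correct addition.
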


We now quote \cite{Bergh-Schnurer-Conservative-Descent} for the two most important properties of Fourier-Mukai functors.
Firstly, semiorthogonal decompositions of derived categories of an algebraic stack $X$ into essential images of Fourier-Mukai 
functors can be checked locally by the geometric conservative descent theorem. Secondly, such decompositions for 
$\Dqc(X)$ implies semiorthogonal decompositions of other derived categories.

\begin{Theorem}
\label{Lemma Geometric Conservative Descent Theorem}
Let $\{Y_i\}_{1 \leq i \leq n}$ and $X$ be algebraic stacks over a base algebraic stack $C$ with Fourier-Mukai functors 
\[      \Phi_i  : \Dqc(Y_i) \rightarrow \Dqc(X).                            \]
Let $u : C' \rightarrow C$ be a faithfully flat morphism with $\{Y'_i\}_{1 \leq i \leq n}$ and $X'$ being the base change of 
$\{Y_i\}_{1 \leq i \leq n}$ and $X$ respectively. Then the base changed Fourier-Mukai functors
\[ \Phi'_i  : \Dqc(Y'_i) \rightarrow \Dqc(X') \]
being fully faithful implies the same for $\Phi_i$. If the essential images of $\Phi'_i$ fit into an 
semiorthogonal sequence implies the same about the essential images of $\Phi_i$. Further, if this semiorthgonal sequence 
corresponding to $\Phi'_i$ gives a semiorthogonal decomposition of $\Dqc(X')$, then the semiorthgonal sequence 
corresponding to $\Phi_i$ gives a semiorthogonal decomposition of $\Dqc(X)$.
\end{Theorem}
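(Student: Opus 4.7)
The statement is a specialization, to morphisms that are additionally representable, of the main conservative descent theorem of Bergh and Schn\"urer. The plan is to reduce each of the three assertions to a conservativity argument, after first recording the base-change compatibility of the Fourier-Mukai functors in our setting.

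Two geometric inputs are needed. First, since $u : C' \to C$ is faithfully flat, the base-changed morphisms $X' \to X$ and $Y'_i \to Y_i$ are faithfully flat, and so the derived pullbacks $\Left\!u^* : \Dqc(X) \to \Dqc(X')$ and similarly for each $Y_i$ are conservative. Second, base-change compatibility: because every Fourier-Mukai datum $(p,q,\E)$ satisfies the properness and perfectness hypotheses in \ref{Definition Fourier Mukai Transform}, flat base change along $u$ is valid for both $\Right\!q_*$ and the twisted inverse image $q^{\times}$. One concludes that the pullback of $\Phi_i$ along $u$ is canonically identified with $\Phi'_i$, and likewise the pullback of the right adjoint $\Phi_i^*$ with $(\Phi'_i)^*$.

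With these identifications in hand the three assertions become formal. Full-faithfulness of $\Phi_i$ is equivalent to the unit $\Identity \to \Phi_i^* \Phi_i$ being an isomorphism; pulling back along $u$ produces the unit for $\Phi'_i$, which is an isomorphism by hypothesis, and conservativity then forces the original to be one. Semiorthogonality of the essential images, for $i<j$, amounts to the vanishing of $\Phi_i^* \Phi_j(B)$ for every $B \in \Dqc(Y_j)$; the pullback of this object equals $(\Phi'_i)^* \Phi'_j(\Left\!u^* B)$, which vanishes by assumption, so the original vanishes by conservativity. To upgrade the sequence to a decomposition, it suffices to show that any $F \in \Dqc(X)$ lying in the right orthogonal of every $\Phi_i(\Dqc(Y_i))$ must vanish: by adjunction the orthogonality says $\Phi_i^*(F)=0$ for all $i$, so $(\Phi'_i)^*(\Left\!u^*F)=0$, whence $\Left\!u^*F$ lies in the right orthogonal of every essential image of $\Phi'_i$ in $\Dqc(X')$, and therefore is zero by the hypothesized decomposition. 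Conservativity of $\Left\!u^*$ finally yields $F=0$.

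The main obstacle is essentially administrative rather than mathematical: one must verify that the base-change identities used throughout the descent arguments of Bergh and Schn\"urer remain valid under the representability and finiteness hypotheses adopted here. As indicated in the remark following \ref{Definition Fourier Mukai Transform}, the additional conditions imposed in their more general setup are automatic in ours, so the cited results apply with only cosmetic changes.
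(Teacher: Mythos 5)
Your argument is correct and is essentially the proof of the result the paper invokes: the paper's own ``proof'' of this theorem is a bare citation of Bergh--Schn\"urer [BS1, Theorem 6.1], and your sketch --- conservativity of $\Left\!u^*$ for faithfully flat $u$, base-change compatibility of the Fourier--Mukai functors and their right adjoints, and the reduction of full-faithfulness, semiorthogonality, and fullness to the vanishing of an object detected after pullback --- is precisely the strategy of that cited theorem. No gaps to report.
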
 
\begin{proof}
    \cite[Theorem 6.1]{Bergh-Schnurer-Conservative-Descent}.  
\end{proof}

\begin{Lemma}
\label{Lemma semiorthogonal decomposition of Dqc gives semiorthogonal decomposition of other derived categories}
Let $\{Y_i\}_{1 \leq i \leq n}$ and $X$ be algebraic stacks over a base algebraic stack $C$
with fully faithful Fourier Mukai functors 
\[      \Phi_i  : \Dqc(Y_i) \rightarrow \Dqc(X)                            \]
such that there is a semiorthogonal decomposition of $\Dqc(X)$ into right admissible subcategories given by the essential images
$\IM(\Phi_i)$ of $\Phi_i$
\[      \Dqc(X) = \langle \IM(\Phi_1), \IM(\Phi_2), \hdots \IM(\Phi_n) \rangle                              ,\] 
then there is a semiorthogonal decomposition of the derived categories $\D_p(X)$, $\D^{lb}_{pc}(X)$ and $\D_{sg}(X)$ into right
admissible subcategories given by essential images of the induced fully faithful functors
\[   \Phi_i^p : \D_p(Y_i) \rightarrow \D_p(X)  \] 
\[   \Phi_i^{pc} :  \D^{lb}_{pc}(Y_i) \rightarrow \D^{lb}_{pc}(X) \]
\[   \Phi_i^{sg} :  \D_{sg}(Y_i) \rightarrow \D_{sg}(X)                                                              \]                             
respectively.
\end{Lemma}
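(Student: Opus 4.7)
The plan is to establish two things for each of the subcategories $\D_p$, $\D^{lb}_{pc}$, and $\D_{sg}$: (i) the Fourier--Mukai functor $\Phi_i$ sends the indicated full subcategory of $\Dqc(Y_i)$ into the corresponding full subcategory of $\Dqc(X)$, and (ii) its right adjoint $\Phi_i^*$ sends the indicated full subcategory of $\Dqc(X)$ into the corresponding full subcategory of $\Dqc(Y_i)$. Granting (i) and (ii), the three induced functors $\Phi_i^p$, $\Phi_i^{pc}$, $\Phi_i^{sg}$ are automatically fully faithful, since $\Phi_i$ is fully faithful on $\Dqc$ and each subcategory is full; the essential images form a semiorthogonal sequence for the same reason, $\RHom$ being computed in $\Dqc$.

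For generation, any object $F$ in the relevant subcategory of $\Dqc(X)$ admits, from the given semiorthogonal decomposition of $\Dqc(X)$, a filtration whose graded pieces have the form $\Phi_i(G_i)$; the $G_i$ arise by applying the right adjoints $\Phi_i^*$ to the successive truncations of $F$, so (ii) guarantees each $G_i$ lies in the corresponding subcategory of $\Dqc(Y_i)$. This promotes the decomposition to the subcategory level and yields right-admissibility of the essential images there.

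For (i) we use the formula $\Phi_i(-)=\Right q_*(\E\otimes\Left p^*(-))$: pullback along $p$ preserves perfect and pseudo-coherent complexes; tensoring with $\E$, which is perfect under the standing hypotheses of \ref{Definition Fourier Mukai Transform}, preserves both classes; and $\Right q_*$ preserves them because $q$ is proper and perfect, while finiteness of $q$ preserves local boundedness. For (ii) the analogous formula $\Phi_i^*(-)=\Right p_*(\E^{\vee}\otimes q^{\times}(-))$ combines with the hypothesis that $q^{\times}(\mathcal{O}_Y)$ is perfect to ensure that $q^{\times}$ preserves perfect and pseudo-coherent complexes with locally bounded cohomology; then $\Right p_*$ handles the pushforward just as in (i). For the singularity category, $\Phi_i^{sg}$ descends to the Verdier quotient automatically once (i) and (ii) are in hand for both $\D_p$ and $\D^{lb}_{pc}$, and fullness, faithfulness, semiorthogonality, and generation pass to the quotient by its universal property.

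The main obstacle is the preservation statements (i) and (ii), i.e.\ the bookkeeping of finite Tor-amplitude and local boundedness through pullback, twisted tensor product, and pushforward along $p$ and $q$. These preservation properties are precisely those established in \cite{Bergh-Schnurer-Conservative-Descent} for the class of Fourier--Mukai transforms singled out in \ref{Definition Fourier Mukai Transform}, so the cleanest course is to cite their results rather than reprove them; the remaining parts of the argument above are then formal.
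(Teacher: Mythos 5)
The paper's proof of this lemma is simply the citation \cite[Theorem 6.2]{Bergh-Schnurer-Conservative-Descent}, which is exactly where your argument lands, and your sketch of the preservation and promotion mechanics is a faithful outline of that theorem's internal proof. The only imprecision is the claim that full faithfulness descends to the Verdier quotient $\D_{sg}$ ``by its universal property'': in general a fully faithful functor need not stay fully faithful on Verdier quotients, and what is actually needed is the compatibility of the induced semiorthogonal decompositions of $\D_p(X)$ and $\D^{lb}_{pc}(X)$ --- which your steps (i) and (ii) do provide and which the cited theorem handles.
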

\begin{proof}
\cite[Theorem 6.2]{Bergh-Schnurer-Conservative-Descent}.
\end{proof}

We end this section with two lemmas on generation. 

\begin{Lemma}
    \label{Lemma Generator contained in Semiorthogonal Sequence implies Semiorthogonal Decomposition}
    Let $\mathcal{T}$ be a triangulated category such that an object $\E$ in $\mathcal{T}$ classically generates $\mathcal{T}$. Let
    $\mathcal{T}_1$, $\mathcal{T}_2$, $\mathcal{T}_3$, $\hdots$, $\mathcal{T}_n$ be a semiorthogonal sequence of right admissible 
    subcategories of $\mathcal{T}$. Then there is a semiorthogonal decomposition
    \[ \mathcal{T} = \langle \mathcal{T}_1, \mathcal{T}_2, \mathcal{T}_3, \hdots, \mathcal{T}_n \rangle \]
    if and only if $\E\in \langle \mathcal{T}_1, \mathcal{T}_2, \mathcal{T}_3, \hdots, \mathcal{T}_n \rangle$.
\end{Lemma}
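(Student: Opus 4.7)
The plan is to treat the forward direction as immediate (if $\mathcal{T}$ already equals $\mathcal{S}:=\langle \mathcal{T}_1,\ldots,\mathcal{T}_n\rangle$, then $\E\in\mathcal{T}$ lies in $\mathcal{S}$) and to concentrate on the converse. The overall strategy is to show that $\mathcal{S}$ is a strictly full, thick, triangulated subcategory of $\mathcal{T}$ containing $\E$; once this is established, the hypothesis that $\E$ classically generates $\mathcal{T}$ forces $\mathcal{T}\subseteq\mathcal{S}$, and hence $\mathcal{S}=\mathcal{T}$, so the given semiorthogonal sequence upgrades to a semiorthogonal decomposition.

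Strict fullness and the triangulated structure on $\mathcal{S}$ are automatic from the definition of the generated subcategory. The real content is thickness, i.e.\ closure under direct summands. I would obtain this by first establishing that $\mathcal{S}$ itself is right admissible in $\mathcal{T}$, and then using a standard argument: for $X\in\mathcal{S}$ the counit $ir(X)\to X$ is an isomorphism, so any splitting $X=Y\oplus Z$ in $\mathcal{T}$ yields counits $ir(Y)\to Y$ and $ir(Z)\to Z$ whose cones lie in $\mathcal{S}^{\perp}$ and whose direct sum is the cone of $ir(X)\to X$, hence zero. Each cone therefore vanishes and $Y,Z\in\mathcal{S}$.

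To prove right admissibility of $\mathcal{S}$, I would induct on $n$. The case $n=1$ is the hypothesis. For the inductive step, I would combine the right adjoint of $\mathcal{T}_1\hookrightarrow\mathcal{T}$ with the inductively provided right adjoint of $\langle\mathcal{T}_2,\ldots,\mathcal{T}_n\rangle\hookrightarrow\mathcal{T}$: given $T\in\mathcal{T}$, project first to $\mathcal{T}_1$, then project the remainder (which lies in $\mathcal{T}_1^{\perp}$) to $\langle\mathcal{T}_2,\ldots,\mathcal{T}_n\rangle$. The paper's convention $\mathrm{Hom}(\mathcal{T}_i,\mathcal{T}_j)=0$ for $i<j$ puts $\mathcal{T}_j\subseteq\mathcal{T}_1^{\perp}$ for every $j>1$, so this second projection stays inside $\mathcal{T}_1^{\perp}$ and its cone therefore lies in $\mathcal{S}^{\perp}$; an octahedral argument then assembles the two projections into an object of $\mathcal{S}$ equipped with a natural map from $T$ whose cone is in $\mathcal{S}^{\perp}$.

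The hard part will be this inductive bookkeeping: one must respect the orientation of the semiorthogonality (project to the earliest factor first) and verify that each residual term lies in the correct orthogonal so that the octahedron closes up. Once right admissibility is in hand, thickness follows formally, and the classical generation hypothesis finishes the argument in one line.
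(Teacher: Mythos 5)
Your proof is correct and follows essentially the same route as the source the paper relies on: the paper does not prove this lemma itself but simply cites \cite[Lemma 6.4]{Bergh-Schnurer-Conservative-Descent}, and your argument --- show $\mathcal{S}=\langle\mathcal{T}_1,\dots,\mathcal{T}_n\rangle$ is right admissible by induction (project to $\mathcal{T}_1$, then project the residue to $\langle\mathcal{T}_2,\dots,\mathcal{T}_n\rangle$, assemble by the octahedron), deduce thickness from admissibility, and then invoke classical generation by $\E$ --- is exactly the standard argument underlying that reference. The only step worth writing out explicitly is the last link in the admissibility induction: that producing, for every $T$, a distinguished triangle $S\to T\to C$ with $S\in\mathcal{S}$ and $C\in\mathcal{S}^{\perp}$ is enough to yield a right adjoint to the inclusion (Bondal's criterion), together with the observation that $\mathcal{S}^{\perp}=\bigcap_i\mathcal{T}_i^{\perp}$, which is what makes your cone $C$ land in the correct orthogonal.
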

\begin{proof}
    \cite[Lemma 6.4]{Bergh-Schnurer-Conservative-Descent}.
\end{proof}

\begin{Lemma}
    \label{Lemma Ample Line Bundle Generator}
    If a scheme $X$ has an ample line bundle $\Line$, then there exists a vector bundle $\VectorV$ over $X$ that classically generates
     $\Dqc(X)$.
\end{Lemma}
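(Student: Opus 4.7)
The plan is to build the generating vector bundle as a finite direct sum
\[ \VectorV \;:=\; \bigoplus_{i=0}^{N} \Line^{-i} \]
for a suitably large integer $N$, and then verify that its thick/localizing closure exhausts $\Dqc(X)$. Since $X$ carries an ample line bundle it is in particular quasi-compact and quasi-separated, so the compact generation machinery of Bondal--Van den Bergh is available; this result is the natural black box to invoke, and the task reduces to arranging a single vector bundle (rather than a perfect complex) as the generator.

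The first step is to recall the standard consequence of ampleness: for every quasi-coherent sheaf $\F$ on $X$, and for all sufficiently large $n$, the twist $\F\otimes\Line^{n}$ is generated by its global sections. Dualizing, one obtains a surjection $\bigoplus_{j}\Line^{-n_j}\twoheadrightarrow\F$ with $n_j\gg 0$. Iterating produces resolutions of arbitrary quasi-coherent sheaves, hence of arbitrary objects of $\Dqc(X)$, by complexes built out of the infinite family $\{\Line^{-n}\}_{n\ge 0}$. This already shows that this family, taken collectively, generates $\Dqc(X)$.

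The second step, and the key one, is to truncate this infinite family to a finite one. Concretely, I would argue that there is an integer $N$ (depending on $X$ and $\Line$, for instance through the Castelnuovo--Mumford regularity of $\mathcal{O}_X$ with respect to $\Line$) such that every higher power $\Line^{-m}$ with $m>N$ lies in the thick triangulated subcategory generated by $\Line^{0},\Line^{-1},\dots,\Line^{-N}$. The mechanism is a Koszul-type short exact sequence obtained by pulling back the tautological surjection from the projective embedding furnished by a power of $\Line$: the kernel is a vector bundle whose constituents are controlled by lower powers of $\Line$, and so induction on $m-N$ does the job. Taking $\VectorV := \bigoplus_{i=0}^{N}\Line^{-i}$ then gives a single vector bundle whose thick closure contains every $\Line^{-n}$, hence by Step~1 generates $\Dqc(X)$.

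The main obstacle is the finiteness of $N$: producing an effective bound requires either invoking the regularity of $X$ with respect to $\Line$ or, more cleanly, citing the Bondal--Van den Bergh theorem on compact generators for quasi-compact, quasi-separated schemes with an ample line bundle, which directly yields a compact generator of $\Dqc(X)$ of the desired shape. Once the finiteness is in hand the verification is formal: any $\F\in\Dqc(X)$ with $\RHom(\VectorV,\F[k])=0$ for all $k$ satisfies $\RHom(\Line^{-i},\F[k])=0$ for $0\le i\le N$ and hence, by the Step~2 reduction, for all $i\ge 0$, forcing $\F=0$.
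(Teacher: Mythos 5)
The paper offers no argument of its own here -- it simply cites \cite[Lemma 6.5]{Bergh-Schnurer-Conservative-Descent} -- and your sketch is essentially a reconstruction of the standard proof behind that citation (Bondal--Van den Bergh/Neeman: finitely many negative powers of an ample line bundle compactly generate $\Dqc$ of a quasi-compact separated scheme). So the route is the right one. Two points deserve correction. First, a vector bundle is a perfect complex, and the \emph{thick} closure of a perfect complex consists of perfect complexes only, so no vector bundle can ``classically generate'' $\Dqc(X)$ in the literal sense of Definition \ref{Definition Classical Generation of Triangulated Catgory}; this is a defect of the statement as reproduced in the paper rather than of your argument. What Bergh--Schn\"urer actually prove, and what the application in Lemma \ref{Lemma Generator contained in Semiorthogonal Sequence implies Semiorthogonal Decomposition} requires, is generation in the localizing sense, equivalently vanishing of the right orthogonal $\RHom(\VectorV,\F[k])=0\ \forall k\Rightarrow\F=0$ -- which is exactly the criterion you verify in your last paragraph, so your proof is aimed at the correct target.

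Second, your mechanism for the truncation step leans on a projective embedding and on Castelnuovo--Mumford regularity, neither of which is available here: $X$ is merely a scheme with an ample line bundle, so it is quasi-compact and separated but need not be proper or of finite type over any base. The fix is standard and closer to what you intend: by ampleness and quasi-compactness choose sections $s_0,\dots,s_r$ of a single power $\Line^{\otimes d}$ whose non-vanishing loci are affine and cover $X$. The resulting surjection $\mathcal{O}_X^{\oplus(r+1)}\twoheadrightarrow\Line^{\otimes d}$ has an exact Koszul complex, which after twisting exhibits $\Line^{\otimes -m}$ for $m\geq (r+1)d$ as an iterated cone of strictly smaller negative powers; induction then puts every $\Line^{\otimes -m}$, $m\geq 0$, in the thick closure of $\{\Line^{\otimes -i}\}_{0\leq i<(r+1)d}$, giving your $N$ with no regularity input. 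Finally, be aware that the closing implication ``$\RHom(\Line^{\otimes -i},\F[k])=0$ for all $i\geq 0$ and all $k$ forces $\F=0$'' is not formal for unbounded $\F$; it is precisely the content of the Bondal--Van den Bergh/Neeman generation theorem (proved via the Koszul complexes of the $s_i$ and a \v{C}ech argument), so that citation is doing real work and should be kept explicit rather than treated as an optional shortcut.
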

\begin{proof}
    \cite[Lemma 6.5]{Bergh-Schnurer-Conservative-Descent}.
\end{proof}

\section{Some Representation Theory}

We recall some relevant facts from 
\cite{Akin-Buchsbaum-Weyman} and \cite{Grassmannian-arbitrary-characteristic}. 

\begin{Definition}
\label{Definition of partitions}
A partition $\alpha$ = $\{ \alpha_{1}, \alpha_{2} \hdots \alpha_{n}\}$ is a non-increasing sequence of non-negative integers. The 
degree of a partition is defined to be the sum $\sum_{i=1}^{n} \alpha_i$ and is denoted by $\vert \alpha \vert$. To every partition
$\alpha$, we can associate a Young diagram, see \cite{fulton}. The transpose partition is defined to be the partition corresponding to the transpose
of the Young diagram. The transpose of $\alpha$ is denoted by $\alpha'$.
\end{Definition}

Given a partition $\alpha$ and vector bundle $\VectorV$, we  define the following vector bundles
\[      \bigwedge^{\alpha}\VectorV =  \bigotimes_{i} \bigwedge^{\alpha_{i}}\VectorV            \]
\[      \Sym^{\alpha}\VectorV = \bigotimes_{i} \Sym^{\alpha_{i}}\VectorV                       \]
\[      \Divided^{\alpha}\VectorV =  \bigotimes_{i} \Divided^{\alpha_{i}}\VectorV                      \]
where $\Divided^{u}$ is $u$-th divided power representation.

\begin{Definition}
\label{Definition of Schur and Weyl Functors}    
Given a partition $\alpha$ and vector bundle $\VectorV$, the Schur functor $\Schur^{\alpha}\VectorV$ is defined to be the image 
of the composition of the natural symmetrization and antisymmetrization maps \cite[Section I.2,I.3]{Akin-Buchsbaum-Weyman}
\begin{equation}
    \label{Equation Schur Functors}
    \bigwedge^{\alpha}\VectorV \xrightarrow{a} \VectorV^{\vert \alpha \vert}  \xrightarrow{s} \Sym^{\alpha}\VectorV    
\end{equation}
    and the Weyl functor (defined as the coSchur functor in \cite{Akin-Buchsbaum-Weyman}) $\Weyl^{\alpha}\VectorV$ can be defined to be
 the image of the composition of natural maps
 \begin{equation}
    \label{Equation Weyl Functors}
     \Divided^{\alpha}\VectorV   \xrightarrow{s} \VectorV^{\vert \alpha \vert}  \xrightarrow{a}   \bigwedge^{\alpha'}\VectorV.                                                                                                                   
 \end{equation}
\end{Definition}    

We now note down a few properties of the Schur and Weyl functors.

\begin{Lemma}
\label{Lemma Schur and Weyl are vector bundles}    
For a vector bundle $\VectorV$, both the Schur and Weyl functors $\Schur^{\alpha}\VectorV$ and $\Weyl^{\alpha}\VectorV$
are vector bundles. 
\end{Lemma}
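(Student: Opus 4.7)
The plan is to reduce to the case where $\VectorV$ is the trivial bundle of rank $n$, and then invoke the explicit freeness results of Akin--Buchsbaum--Weyman. Since $\VectorV$ is a vector bundle, there is a Zariski cover on which $\VectorV$ is isomorphic to $\mathcal{O}^{\oplus n}$. The constructions $\bigwedge^\alpha$, $\Sym^\alpha$, and $\Divided^\alpha$ are all defined functorially out of tensor products and all commute with arbitrary base change; moreover the natural symmetrization and antisymmetrization maps $a$ and $s$ appearing in \eqref{Equation Schur Functors} and \eqref{Equation Weyl Functors} are defined universally. Consequently the formation of $\Schur^\alpha\VectorV$ and $\Weyl^\alpha\VectorV$ as the images of these natural transformations localizes, so it suffices to prove the lemma when $\VectorV = \mathcal{O}_X^{\oplus n}$.

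In the free case, I would cite the explicit standard basis theorems of \cite{Akin-Buchsbaum-Weyman}: the Schur module $\Schur^\alpha(\mathcal{O}_X^{\oplus n})$ and the Weyl module $\Weyl^\alpha(\mathcal{O}_X^{\oplus n})$ are each free $\mathcal{O}_X$-modules with bases indexed by standard Young tableaux of shape $\alpha$ (respectively $\alpha'$) with entries in $\{1,\ldots,n\}$. In particular their rank depends only on $\alpha$ and $n$, not on $X$. This uses in an essential way that the images are computed in the category of $\mathcal{O}_X$-modules at the level of the free integral model over $\Spec(\Integers)$ and then obtained on $X$ by pullback; the universal freeness of the integral model ensures that image formation commutes with base change here, which is the only nontrivial point.

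Gluing the local descriptions, one concludes that $\Schur^\alpha\VectorV$ and $\Weyl^\alpha\VectorV$ are locally free of constant rank, hence vector bundles. The main obstacle is really the compatibility of the image construction with base change: in general the image of a map of modules does not commute with tensor products, and the lemma would fail without the structural input from \cite{Akin-Buchsbaum-Weyman} showing that the relevant images are themselves universally free, with bases compatible with restriction to any open subscheme or pullback along any morphism.
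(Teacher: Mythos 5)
Your proof is correct and follows essentially the same route as the paper, which simply cites the standard basis theorems \cite[Theorems II.2.16 and II.3.16]{Akin-Buchsbaum-Weyman}; you have merely made explicit the localization to the free case and the base-change compatibility that those theorems guarantee. No gaps.
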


\begin{proof}
\cite[Theorem II.2.16]{Akin-Buchsbaum-Weyman} and \cite[Theorem II.3.16]{Akin-Buchsbaum-Weyman}.
\end{proof}

\begin{Lemma}
\label{Lemma Relation between Schur and Weyl}   
For a vector bundle $\VectorV$, there is a natural identification
\[    \Weyl^{\alpha}\!\VectorV = (\Schur^{\alpha}\!\VectorV^{\vee})^{\vee} \]
\end{Lemma}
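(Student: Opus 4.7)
The plan is to dualize the composition that defines $\Schur^{\alpha}\VectorV^{\vee}$ and identify the resulting image with the defining image of $\Weyl^{\alpha}\VectorV$. Everything rests on standard duality being natural with respect to the constructions involved.

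First I would record, for any finite locally free sheaf $\E$, the natural isomorphisms $(\E^{\otimes r})^{\vee}\cong(\E^{\vee})^{\otimes r}$, $(\bigwedge^{r}\E)^{\vee}\cong\bigwedge^{r}(\E^{\vee})$, and crucially the pairing $(\Sym^{r}\E)^{\vee}\cong\Divided^{r}(\E^{\vee})$ between symmetric and divided powers. This last identification is exactly the Akin--Buchsbaum--Weyman duality from \cite[Section I.4]{Akin-Buchsbaum-Weyman}; it is the reason Weyl (coSchur) functors are the correct characteristic-free dual of Schur functors. Tensoring over the parts of $\alpha$ promotes these to the partition-indexed versions, giving $(\bigwedge^{\alpha}\E)^{\vee}\cong\bigwedge^{\alpha}(\E^{\vee})$ and $(\Sym^{\alpha}\E)^{\vee}\cong\Divided^{\alpha}(\E^{\vee})$.

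Next, I would verify that the symmetrization and antisymmetrization maps $s$ and $a$ appearing in \ref{Definition of Schur and Weyl Functors} are compatible with these dualities, in the sense that dualizing $s:\VectorV^{|\alpha|}\to\Sym^{\alpha}\VectorV$ recovers the symmetrization $\Divided^{\alpha}\VectorV^{\vee}\to(\VectorV^{\vee})^{|\alpha|}$, and similarly for $a$. This compatibility is spelled out in \cite[Chapter II]{Akin-Buchsbaum-Weyman}, so in practice I would cite their calculation rather than redo it. Because $\VectorV$ and all sheaves in sight are locally free of finite rank, the dualization functor $(-)^{\vee}$ is exact on the categories involved, and in particular it commutes with formation of images: the dual of the image of a morphism between vector bundles is canonically the image of the dual morphism.

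Assembling these pieces, starting from the defining composition
\[ \bigwedge^{\alpha}\VectorV^{\vee}\xrightarrow{a}(\VectorV^{\vee})^{|\alpha|}\xrightarrow{s}\Sym^{\alpha}\VectorV^{\vee}, \]
whose image is $\Schur^{\alpha}\VectorV^{\vee}$, dualizing and substituting the isomorphisms above produces a natural identification of $(\Schur^{\alpha}\VectorV^{\vee})^{\vee}$ with the image of
\[ \Divided^{\alpha}\VectorV\xrightarrow{s}\VectorV^{|\alpha|}\xrightarrow{a}\bigwedge^{\alpha'}\VectorV, \]
which by definition is $\Weyl^{\alpha}\VectorV$. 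The main obstacle, and the only nontrivial point, is the bookkeeping in the previous paragraph: one must carefully track how the (anti)symmetrization intertwiners transform under the symmetric/divided power duality and in particular how the indexing shift from $\alpha$ to $\alpha'$ emerges on the wedge side. Once the normalizations from Akin--Buchsbaum--Weyman are invoked, the rest of the argument is formal.
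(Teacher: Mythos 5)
Your outline is essentially the standard proof of the result the paper simply delegates to \cite[Proposition II.4.1]{Akin-Buchsbaum-Weyman}: dualize the defining composition for $\Schur^{\alpha}\VectorV^{\vee}$ using the $\Sym$--$\Divided$ and exterior-power dualities, and identify the dual of the image with the image of the dual composition. So the route is the right one, and the paper itself offers nothing more than the citation plus a remark about the $\alpha$ versus $\alpha'$ convention.

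One step, however, is justified too glibly. You assert that because everything in sight is locally free, $(-)^{\vee}$ ``commutes with formation of images.'' That is false for a general morphism of vector bundles: for $f\colon \mathbb{Z}\xrightarrow{\,2\,}\mathbb{Z}$ the image of $f^{\vee}$ is $2\mathbb{Z}$ while the dual of the image of $f$ is all of $\mathbb{Z}$. What is actually needed is that the inclusion $\IM(f)\hookrightarrow \Sym^{\alpha}\VectorV^{\vee}$ has locally free cokernel (equivalently, that the image is a subbundle, not merely a subsheaf that happens to be locally free); only then does dualizing the factorization $A\twoheadrightarrow\IM(f)\hookrightarrow B$ yield $B^{\vee}\twoheadrightarrow\IM(f)^{\vee}\hookrightarrow A^{\vee}$. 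This is exactly what the Akin--Buchsbaum--Weyman standard basis theorems supply (the universal freeness results recorded in Lemma \ref{Lemma Schur and Weyl are vector bundles} and their counterparts for the cokernels), so the gap is fillable by citation, but the reason is the standard basis theorem, not mere local freeness of $\VectorV$. Separately, be aware that with the paper's definitions as literally written, dualizing the Schur composition lands in $\bigwedge^{\alpha}\VectorV$ while the Weyl composition targets $\bigwedge^{\alpha'}\VectorV$; you correctly flag this transpose bookkeeping as the point requiring care, and resolving it amounts to pinning down which of the two ABW conventions the paper is using (see the remark following the lemma in the paper).
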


\begin{proof} This is essentially
\cite[Proposition II.4.1]{Akin-Buchsbaum-Weyman}. We remark here that 
 our definition of Schur functor corresponding to a 
partition $\alpha$ is the Schur functor corresponding to partition $\alpha'$ in \cite{Akin-Buchsbaum-Weyman}.  
\end{proof}

\begin{Theorem}[Universal Cauchy Littlewood Formula]
\label{Theorem Universal Cauchy Littlewood Formula}  
Given vector bundles $\VectorV$ and $\VectorW$, we have a pairing for every partition $\alpha$
	\[       \langle , \rangle_{\alpha} : \bigwedge^{\alpha}\VectorV \otimes \Divided^{\alpha}\VectorW \rightarrow \wedge^{\vert \alpha \vert} (\VectorV \otimes \VectorW)                                          \]
	such that 
	\[     \M_{\alpha}(\VectorV,\VectorW)  = \sum\limits_{\beta \geq \alpha, \vert \alpha \vert = \vert \beta \vert = m} \IM(\langle , \rangle_{\beta})                   \]
	gives a natural filtration on $\bigwedge^{m}(\VectorV \otimes \VectorW)$ whose associated graded object is
	\[ \underset{\vert \alpha \vert = m}{\bigoplus}  \Schur^{\alpha'}\!\VectorV \otimes \Weyl^{\alpha}\!\VectorW . \]
\end{Theorem}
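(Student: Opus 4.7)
The plan is to construct the pairing $\langle,\rangle_\alpha$ from natural Hopf-algebraic operations and then identify the associated graded by reducing to the universal setting of \cite{Akin-Buchsbaum-Weyman}. For each integer $p \geq 0$ there is a canonical pairing
\[ \bigwedge^p \VectorV \otimes \Divided^p \VectorW \to \bigwedge^p(\VectorV \otimes \VectorW) \]
arising from the duality between the exterior and divided power algebras applied to the identity on $\VectorV \otimes \VectorW$. For a partition $\alpha = (\alpha_1, \alpha_2, \ldots)$, I would tensor these pairings over the parts of $\alpha$ and compose with the exterior multiplication
\[ \bigotimes_i \bigwedge^{\alpha_i}(\VectorV \otimes \VectorW) \to \bigwedge^{|\alpha|}(\VectorV \otimes \VectorW) \]
to obtain $\langle,\rangle_\alpha$. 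This construction is natural in $\VectorV, \VectorW$ and commutes with pullback along morphisms of algebraic stacks.

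Granting the pairings, the filtration claim is essentially formal: if $\alpha \leq \alpha'$ in dominance order, then $\{\beta : \beta \geq \alpha'\} \subseteq \{\beta : \beta \geq \alpha\}$, so $\M_{\alpha'} \subseteq \M_\alpha$ by the definition of $\M_\alpha$. The substantive content therefore lies in computing the successive quotients $\M_\alpha / \sum_{\beta > \alpha} \M_\beta$. Since every ingredient is natural in vector bundles and compatible with pullback, I would verify the identification of the associated graded locally on the base, reducing first to the case that $\VectorV$ and $\VectorW$ are free of finite rank, and then further to the universal free modules over $\Spec(\Integers)$.

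In this universal setting, the remaining task is to exhibit an isomorphism $\M_\alpha / \sum_{\beta > \alpha} \M_\beta \cong \Schur^{\alpha'}\VectorV \otimes \Weyl^\alpha\VectorW$. This is precisely the universal Cauchy-Littlewood formula proved by Akin, Buchsbaum, and Weyman in \cite{Akin-Buchsbaum-Weyman}, and this is the step I would cite.

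The main obstacle is the last step. Unlike the characteristic-zero situation, where Cauchy's identity combines with complete reducibility of $GL$-representations to yield the decomposition immediately, the characteristic-free statement requires explicit straightening laws for the Schur and Weyl modules defined in \ref{Definition of Schur and Weyl Functors}, along with careful tableau-combinatorial bookkeeping to control the antisymmetrization and symmetrization images. Our proof defers this to \cite{Akin-Buchsbaum-Weyman}, but from first principles this combinatorial analysis is the heart of the theorem.
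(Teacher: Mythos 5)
Your proposal is correct and follows essentially the same route as the paper: construct the natural pairing (your Hopf-algebraic description agrees with the paper's explicit inductive formula from \cite{Akin-Buchsbaum-Weyman}), observe that the filtration property is formal, and reduce the identification of the associated graded to a local statement that is exactly \cite[Theorem III.2.4]{Akin-Buchsbaum-Weyman}. The only caveat is that you should order the partitions as in \emph{loc.\ cit.}\ (the paper uses the lexicographic order, not dominance) and keep track of the transpose convention, since the paper's $\Schur^{\alpha}$ is the Schur functor for $\alpha'$ in \cite{Akin-Buchsbaum-Weyman}.
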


\begin{proof} {This is essentially \cite[Theorem III.2.4]{Akin-Buchsbaum-Weyman}.} We recall a few details.
    For nonnegative integers $p$, we define a pairing inductively
    \[      \langle , \rangle :   \bigwedge^{p}\VectorV \otimes \Divided^{a}\VectorW \rightarrow  \bigwedge^{p} (\VectorV \otimes \VectorW)\]
    following \cite{Akin-Buchsbaum-Weyman}. For $p$ = 1, this pairing is just 
    \[ \langle v , w \rangle = v \otimes w \] 
    For $p$ $>$ 1, this can be defined as 
    \[      \langle v_1 \wedge \hdots \wedge v_p, w_1^{\lambda_1} \hdots w_t^{\lambda_t} \rangle = 
    \sum_{i=1}^{p} \langle v_i , w_1 \rangle \wedge \langle v_1 \wedge \hdots \wedge \hat{v_i} \wedge \hdots \wedge  v_p, w_1^{\lambda_1-1} \hdots w_t^{\lambda_t} \rangle           \]
    This pairing can be extended to give a map for a partition $\alpha$ = $\{ \alpha_{1}, \alpha_{2} \hdots \alpha_{n}\}$ with $\vert \alpha \vert$ = $m$,
    \[       \langle , \rangle_{\alpha} : \bigwedge^{\alpha}\VectorV \otimes \Divided^{\alpha}\VectorW \rightarrow \bigwedge^{\vert \alpha \vert} (\VectorV \otimes \VectorW)                                          \]
    \[  \langle f_1 \otimes \hdots \otimes f_n, g_1 \otimes \hdots \otimes g_n  \rangle_{\alpha} = \langle f_1,g_1 \rangle \wedge \hdots \wedge \langle f_n, g_n \rangle \]
   So we can define a filtration on $\bigwedge^{m}(\VectorV \otimes \VectorW)$ filtered by
     \[     \M_{\alpha}(\VectorV,\VectorW)  = \sum\limits_{\beta \geq \alpha, \vert \beta \vert = \vert \alpha \vert = m} \IM(\langle , \rangle_{\beta})                   \]
   ordered by the lexicographic ordering on the partitions.
    Let
    \[       \M'_{\alpha}(\VectorV,\VectorW)  = \sum\limits_{\beta > \alpha, , \vert \beta \vert = \vert \alpha \vert = m} \IM(\langle , \rangle_{\beta})                                                                                \]
    then the pairing $\langle , \rangle_\alpha$ induces a natural map
    \[ \Schur^{\alpha'}\!\VectorV \otimes \Weyl^{\alpha}\!\VectorW \rightarrow \M_{\alpha}(\VectorV,\VectorW)/\M'_{\alpha}(\VectorV,\VectorW) \] 
    This map is locally an isomorphism by \cite[Theorem III.2.4]{Akin-Buchsbaum-Weyman} thus proving that the associated graded to this filtration is isomorphic to 
    \[ \underset{\vert \alpha \vert = m}{\bigoplus}  \Schur^{\alpha'}\!\VectorV \otimes \Weyl^{\alpha}\!\VectorW . \]
\end{proof}

\section{The Derived Category of a Grassmannian}
\label{s:grassmanian}
   
In this section we will construct a semi-orthogonal decomposition of the bounded derived category of a Grassmannian.
Such a construction has appeared in \cite{efimov}. We will describe here a different approach to this theorem.

\begin{Notation}
    \label{Notation Ordering on Partitions }
    Let $<$ denote the standard lexiographic ordering on partitions. $B_{k,n}$ denote partitions $\alpha$ corresponding to Young diagrams
    with no more than $k$ rows and no more than $n - k$ columns with total ordering $\prec$ such that :
    \begin{enumerate}
        \item If $\vert \alpha \vert$  $<$ $\vert \beta \vert$, then $\alpha$ $\prec$ $\beta$.
        \item If $\vert \alpha \vert$  $=$ $\vert \beta \vert$ and $\beta$ $<$ $\alpha$ in lexicographic ordering  then $\alpha$ $\prec$ $\beta$.
    \end{enumerate}
    and let $\overline{B_{k,n}}$ denote the same set with the opposite ordering.
\end{Notation}
 
The first result in the direction that we are headed is due to Kapranov.

\begin{Theorem}
    \label{Theorem Kapranov} 
    Consider a Grassmannian $\Gr(k,n)_{K}$ over a field $K$ of characteristic zero.
	Then the  set $\{\Schur^{\alpha}\!\Tautological \}$ for $\alpha$ in $B_{k,n}$ 
    is a strongly exceptional sequence on the Grassmannian $\Gr(k,n)_{K}$.
\end{Theorem}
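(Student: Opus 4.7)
The plan is to follow Kapranov's original approach, which invokes the Borel--Weil--Bott theorem and complete reducibility of representations of reductive groups at every step. Both tools are available only in characteristic zero, which is precisely why this theorem is restricted to such fields.

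First, by adjunction,
\[ \RHom(\Schur^\alpha \Tautological, \Schur^\beta \Tautological) \;=\; \mathsf{R}\Gamma\bigl(\Gr(k,n)_K,\, (\Schur^\alpha \Tautological)^\vee \otimes \Schur^\beta \Tautological\bigr). \]
Combining \ref{Lemma Relation between Schur and Weyl} with the coincidence of Schur and Weyl functors in characteristic zero identifies the coefficient sheaf with $\Schur^\alpha \Tautological^\vee \otimes \Schur^\beta \Tautological$. I would then invoke the characteristic-zero Cauchy identity and the Littlewood--Richardson rule to expand this tensor product as a direct sum of bundles of the form $\Schur^\gamma \Tautological$, where $\gamma$ ranges over certain dominant weights of $GL_k$ (with possibly negative entries).

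Second, I would apply the Borel--Weil--Bott theorem on the partial flag variety $\Gr(k,n)_K \cong GL_n/P$ to compute cohomology of each summand. That theorem asserts that $\mathsf{R}\Gamma(\Gr(k,n)_K, \Schur^\gamma \Tautological)$ is concentrated in at most one degree, determined by the Weyl group element required to conjugate $\gamma + \rho$ into the dominant chamber, and vanishes entirely if the conjugate weight fails to be strictly dominant. For orthogonality, a direct weight inspection shows that when $\beta \prec \alpha$, i.e.\ when $|\beta| < |\alpha|$ or $|\beta| = |\alpha|$ with $\alpha <_{\mathrm{lex}} \beta$, every summand in the Littlewood--Richardson decomposition is killed by Borel--Weil--Bott. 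The diagonal case $\alpha = \beta$ yields only the trivial representation, giving $K$ in degree zero; the same cohomological vanishing also rules out $\mathrm{Ext}^p$ for $p > 0$ with $\alpha \neq \beta$, establishing the \emph{strongly} exceptional property.

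Finally, for classical generation of the bounded derived category, I would build a Beilinson--Kapranov Koszul-style resolution of the structure sheaf of the diagonal $\Delta \subset \Gr(k,n)_K \times_K \Gr(k,n)_K$, obtained from wedge powers of the tautological map $\Tautological \boxtimes \Quotient^\vee \to \mathcal{O}$. The Cauchy formula in characteristic zero decomposes each $\bigwedge^m(\Tautological \boxtimes \Quotient^\vee)$ as a direct sum of sheaves of the form $\Schur^\alpha \Tautological \boxtimes \Schur^{\alpha'}\!\Quotient^\vee$ with $\alpha \in B_{k,n}$. Pushing an arbitrary object through the resolution along the two projections then expresses it inside the triangulated subcategory generated by $\{\Schur^\alpha \Tautological\}_{\alpha \in B_{k,n}}$. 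The principal obstacle is the combinatorial bookkeeping required to verify that the ordering $\prec$ on $B_{k,n}$ is precisely the one forced by Borel--Weil--Bott vanishing; both this step and the diagonal-resolution generation step rely essentially on characteristic zero, which is why the later sections of the paper adopt the conservative descent methods of Bergh--Schn\"urer to extend the result to arbitrary base schemes and to twisted forms.
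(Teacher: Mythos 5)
Your proposal is correct and follows exactly the route the paper takes: the paper's proof of this theorem is simply a citation of Kapranov's original article, whose argument is the Borel--Weil--Bott/Littlewood--Richardson computation plus the Koszul resolution of the diagonal that you outline. The only caveat is that you defer the weight-by-weight Bott vanishing check, but that bookkeeping is precisely what the cited reference supplies.
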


\begin{proof}
    See \cite{Kapranov1}.
\end{proof}

For general characteristic, Buchweitz, Leuschke  and Van Der Bergh proved in \cite{Grassmannian-arbitrary-characteristic}:

\begin{Theorem}
    \label{Theorem Grassmannian in arbitrary characteristic}   
    $\{\Schur^{\alpha}\!\Tautological^{\vee} \}$ for $\alpha$ in $B_{k,n}$ 
    is an exceptional sequence on the Grassmannian $\Gr(k,n)_{K}$.
\end{Theorem}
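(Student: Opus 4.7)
The plan is to combine a Koszul-type resolution of the diagonal with the universal Cauchy--Littlewood filtration of \ref{Theorem Universal Cauchy Littlewood Formula} and a characteristic-free vanishing theorem of Kempf type.

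First, on $\Gr(k,n)_K \times_K \Gr(k,n)_K$ the composite
\[
p_1^{*}\Tautological \to \mathcal{O}^{\oplus n} \to p_2^{*}\Quotient
\]
is a global section of $p_1^{*}\Tautological^{\vee}\otimes p_2^{*}\Quotient$ whose vanishing locus is exactly the diagonal $\Delta$. Since the rank $k(n-k)$ of this bundle equals the codimension of $\Delta$, the section is regular and its Koszul complex
\[
0 \to \bigwedge^{k(n-k)}\bigl(p_1^{*}\Tautological\otimes p_2^{*}\Quotient^{\vee}\bigr) \to \cdots \to p_1^{*}\Tautological\otimes p_2^{*}\Quotient^{\vee} \to \mathcal{O} \to \mathcal{O}_{\Delta} \to 0
\]
is exact. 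Applying \ref{Theorem Universal Cauchy Littlewood Formula} to each Koszul term produces a finite filtration whose associated graded pieces take the form
\[
p_1^{*}\Schur^{\alpha'}\Tautological \otimes p_2^{*}\Weyl^{\alpha}\Quotient^{\vee} \;\cong\; p_1^{*}\Schur^{\alpha'}\Tautological \otimes p_2^{*}(\Schur^{\alpha}\Quotient)^{\vee},
\]
where the second identification uses \ref{Lemma Relation between Schur and Weyl}. Rank constraints on $\Tautological$ and $\Quotient^{\vee}$ restrict the nonzero contributions to partitions with $\alpha' \in B_{k,n}$.

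For classical generation, I would apply $\Right(p_1)_{*}\bigl(\Left p_2^{*}(-)\otimes\,-\bigr)$ to the identity $\F \simeq \Right(p_1)_{*}(\Left p_2^{*}\F\otimes \mathcal{O}_{\Delta})$ valid for any $\F\in \Db(\Gr(k,n)_K)$. Substituting the resolution above and invoking the projection formula exhibits $\F$ as a successive extension of objects of the form $\Schur^{\alpha'}\Tautological \otimes V_{\alpha}$, where $V_{\alpha}$ is a complex of $K$-vector spaces. Using the $GL_k$-identity
\[
\Schur^{\lambda}\Tautological \cong \Schur^{\bar\lambda}\Tautological^{\vee}\otimes \mathcal{O}(-\lambda_1), \qquad \bar\lambda_i = \lambda_1 - \lambda_{k+1-i},
\]
together with $\mathcal{O}(1) = \Schur^{(1^k)}\Tautological^{\vee}$ and the characteristic-free Littlewood--Richardson filtrations on tensor products, every such summand already lies in the thick subcategory classically generated by $\{\Schur^{\alpha}\Tautological^{\vee}\}_{\alpha\in B_{k,n}}$. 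Combined with \ref{Lemma Generator contained in Semiorthogonal Sequence implies Semiorthogonal Decomposition}, this yields the first clause of \ref{Definition Exceptional Sequence}.

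For the semiorthogonality and endomorphism conditions the calculation reduces to
\[
\Cohomology^{*}\bigl(\Gr(k,n)_K,\; \Schur^{\alpha}\Tautological \otimes \Schur^{\beta}\Tautological^{\vee}\bigr).
\]
I would decompose the integrand via a characteristic-free Pieri/Littlewood--Richardson filtration and then invoke Kempf vanishing on $\Gr(k,n)_K = GL_n/P$: dominant pieces contribute no higher cohomology, while a direct weight count on $\Cohomology^0$ isolates a single copy of $K$ when $\alpha=\beta$ and nothing when $\beta\prec\alpha$. This last step is the main obstacle. In characteristic zero one reads off $\Cohomology^{*}$ immediately from Borel--Weil--Bott, but in positive characteristic only Kempf vanishing for dominant weights is available, and one must argue combinatorially that each Littlewood--Richardson piece of $\Schur^{\alpha}\Tautological\otimes\Schur^{\beta}\Tautological^{\vee}$ produces a bundle whose cohomology can be controlled. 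Aligning this combinatorial bookkeeping with the total order $\prec$ on $B_{k,n}$, so that vanishing holds for every $\beta\prec\alpha$ in the sense of \ref{Definition Exceptional Sequence}(2), is the delicate technical heart of \cite{Grassmannian-arbitrary-characteristic}.
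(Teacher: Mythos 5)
This theorem is not proved in the paper at all: it is imported verbatim as the main result of Buchweitz--Leuschke--Van den Bergh \cite{Grassmannian-arbitrary-characteristic}, and everything the paper actually uses from it is the cohomological statement \ref{Theorem cohomological properties of Dual Tautological}, again by citation. Your sketch correctly reproduces the standard architecture (Koszul resolution of the diagonal, Cauchy--Littlewood filtration of the exterior powers, pushforward along a projection to get generation, then a cohomology computation for exceptionality), but it does not close the one step that constitutes the actual content of the theorem in arbitrary characteristic. You say so yourself: the vanishing of $\RHom(\Schur^{\alpha}\Tautological^{\vee},\Schur^{\beta}\Tautological^{\vee})$ for $\beta\prec\alpha$ and the computation of the endomorphisms are deferred to ``the delicate technical heart of \cite{Grassmannian-arbitrary-characteristic}.'' That is a citation, not a proof. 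The difficulty is real: in positive characteristic Borel--Weil--Bott fails, $\Schur^{\alpha}\Tautological\otimes\Schur^{\beta}\Tautological^{\vee}$ is a tensor product of a costandard-type and a standard-type module and does not in general carry a filtration by Schur or Weyl functors of partitions (the relevant weights are not polynomial), so ``characteristic-free Pieri/Littlewood--Richardson plus Kempf vanishing'' is not a routine reduction. Since conditions (2) and (3) of \ref{Definition Exceptional Sequence} are exactly what the rest of the paper needs (via \ref{Lemma Cohomology of Weyl functors}), the proposal as written leaves the theorem unproved.

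There is also a secondary problem in your generation argument. The graded pieces of your filtration have first factor $\Schur^{\alpha'}\Tautological$, and you propose to convert these into objects of $\langle\Schur^{\mu}\Tautological^{\vee}\rangle_{\mu\in B_{k,n}}$ via $\Schur^{\lambda}\Tautological\cong\Schur^{\bar\lambda}\Tautological^{\vee}\otimes\mathcal{O}(-\lambda_1)$. But $\mathcal{O}(-\lambda_1)=(\det\Tautological)^{\otimes\lambda_1}$ is a \emph{negative} twist, i.e.\ $\Schur^{\lambda}\Tautological$ is the Schur functor of $\Tautological^{\vee}$ attached to the non-polynomial dominant weight $(-\lambda_k,\dots,-\lambda_1)$; no Littlewood--Richardson filtration lands this inside the set of $\Schur^{\mu}\Tautological^{\vee}$ with $\mu$ a partition in the box. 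Compare the paper's own generation argument (Lemma \ref{Lemma fullness of the semiorthogonal decomposition}): there one dualizes the Cauchy--Littlewood filtration of $\bigwedge^{i}(\Tautological^{\vee}\boxtimes\Quotient)$ so that the first tensor factors come out as $\Weyl^{\alpha}\Tautological$ on the nose, which is why that paper's semiorthogonal decomposition is phrased in terms of $\Weyl^{\alpha}\Tautological$ rather than $\Schur^{\alpha}\Tautological^{\vee}$. If you want generation by the $\Schur^{\alpha}\Tautological^{\vee}$ themselves, you need a separate argument (e.g.\ the tilting-bundle formalism of \cite{Grassmannian-arbitrary-characteristic}), not the twist-and-filter step as stated.
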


In particular, they showed that

\begin{Theorem}
	\label{Theorem cohomological properties of Dual Tautological}
    Let $\alpha$, $\beta$ be in $B_{k,n}$. 
    \begin{enumerate}
	\item \[\RHom_{\mathcal{O}_{\Gr(k,n)_{K}}} (\Schur^{\alpha}\!\Tautological^{\vee},\Schur^{\beta}\!\Tautological^{\vee}) = 0       \]
	for $\beta$ $\prec$ $\alpha$.
	\item  \[\RHom_{\mathcal{O}_{\Gr(k,n)_{K}}} (\Schur^{\alpha}\!\Tautological^{\vee},\Schur^{\alpha}\!\Tautological^{\vee}) = K.       \]
\end{enumerate}
\end{Theorem}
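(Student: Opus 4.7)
The starting point is to convert the $\RHom$ computation to a cohomology computation on the Grassmannian. Since $\Schur^\alpha\Tautological^\vee$ is locally free, Lemma \ref{Lemma Relation between Schur and Weyl} yields
$$\RHom_{\mathcal{O}_{\Gr(k,n)_K}}(\Schur^\alpha\Tautological^\vee,\, \Schur^\beta\Tautological^\vee) \cong \Right\Gamma\bigl(\Gr(k,n)_K,\; \Weyl^\alpha\Tautological \otimes \Schur^\beta\Tautological^\vee\bigr),$$
reducing the problem to computing the cohomology of the bundle $\Weyl^\alpha\Tautological \otimes \Schur^\beta\Tautological^\vee$.

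To break this tensor product into tractable pieces I would exploit the tautological sequence $0 \to \Tautological \to \mathcal{O}_{\Gr(k,n)_K}^{\oplus n} \to \Quotient \to 0$. Applying Schur and Weyl functors to this sequence via the characteristic-free functorial machinery of \cite{Akin-Buchsbaum-Weyman}, together with the Cauchy-Littlewood filtration of Theorem \ref{Theorem Universal Cauchy Littlewood Formula} applied to the pair $(\Tautological, \mathcal{O}_{\Gr(k,n)_K}^{\oplus n})$, yields a filtration on $\Weyl^\alpha\Tautological \otimes \Schur^\beta\Tautological^\vee$ whose associated graded is a direct sum of homogeneous bundles of the form $\Schur^\gamma\Quotient^\vee \otimes \Weyl^\delta\Tautological$ (and their duals) for partitions $\gamma, \delta$ controlled combinatorially by $\alpha$ and $\beta$.

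The vanishing then follows from Kempf's theorem, the characteristic-free substitute for Borel-Bott-Weil: higher cohomology of the relevant graded pieces on $\Gr(k,n)_K$ vanishes when the indexing partitions lie in the appropriate bounded box, and $H^0$ is nonzero only for a very restricted set of matching pairs. For $\beta \prec \alpha$, all surviving graded pieces of the filtration correspond to pairs for which even $H^0$ vanishes, giving (1). For $\alpha = \beta$, exactly one trivial summand survives and contributes $H^0(\mathcal{O}_{\Gr(k,n)_K}) = K$, matching what Schur's lemma would predict and giving (2).

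\textbf{Main obstacle.} The principal difficulty is the \emph{characteristic-freeness}. In characteristic zero Schur functors split off tensor powers as direct summands and Borel-Bott-Weil settles the cohomology immediately; in positive characteristic neither tool is available, and one must replace direct sum decompositions by filtrations whose graded pieces are indexed with respect to $\prec$, invoke Kempf vanishing in place of Bott, and carefully verify compatibility between the combinatorial ordering on $B_{k,n}$ and which graded pieces actually survive. Establishing this compatibility, so that precisely the terms corresponding to $\beta \prec \alpha$ die and the self-$\Hom$ contributes only the identity, is the delicate combinatorial heart of the argument and is where I would expect to spend the most effort.
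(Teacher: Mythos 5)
The paper does not actually prove this statement: its entire ``proof'' is a citation to \cite[Theorem 1.4]{Grassmannian-arbitrary-characteristic}, together with the dictionary that $\Tautological^{\vee}$ here is the bundle called $\mathcal{Q}$ there. So the statement is an imported input, and the relevant comparison is with the argument of Buchweitz--Leuschke--Van den Bergh rather than with anything in this paper. Your opening reduction is correct: since everything is locally free, Lemma \ref{Lemma Relation between Schur and Weyl} gives $(\Schur^{\alpha}\!\Tautological^{\vee})^{\vee}\cong\Weyl^{\alpha}\!\Tautological$, so the $\RHom$ is the hypercohomology of $\Weyl^{\alpha}\!\Tautological\otimes\Schur^{\beta}\!\Tautological^{\vee}$, and the overall shape of your plan (characteristic-free filtrations with Schur/Weyl graded pieces, then Kempf-type vanishing) does resemble what is actually done in \emph{loc.\ cit.}

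That said, as a proof the proposal has two genuine gaps. First, the filtration you invoke is the wrong tool for the job: Theorem \ref{Theorem Universal Cauchy Littlewood Formula} filters $\bigwedge^{m}(\VectorV\otimes\VectorW)$ for two \emph{different} bundles, and applying it to the pair $(\Tautological,\mathcal{O}^{\oplus n})$ does not produce a filtration of the tensor product $\Weyl^{\alpha}\!\Tautological\otimes\Schur^{\beta}\!\Tautological^{\vee}$ of two functors of the \emph{same} bundle and its dual. What is needed is a characteristic-free Littlewood--Richardson/Pieri-type filtration of such a product (a different part of the Akin--Buchsbaum--Weyman machinery), and the combinatorics of which $\gamma$ appear is exactly what feeds the vanishing argument. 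Second, and decisively, the step you label the ``main obstacle'' --- determining which graded pieces have nonvanishing cohomology on $\Gr(k,n)_{K}$ and verifying that for $\beta\prec\alpha$ none survive while for $\alpha=\beta$ only the trivial summand contributes $K$ --- \emph{is} the theorem; deferring it leaves the proposal as a plausible roadmap rather than a proof. Given that the paper itself treats the statement as external, the intended resolution is simply to cite \cite[Theorem 1.4]{Grassmannian-arbitrary-characteristic}.
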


\begin{proof}
    {This is deduced from \cite[Theorem 1.4]{Grassmannian-arbitrary-characteristic}.} Note that, what is $\Tautological^\vee$ in our notation is 
    denoted ${\mathcal Q}$ in \emph{loc. cit.}
\end{proof}

We will make use of the following lemma which follows from the above Theorem.

\begin{Lemma}
    \label{Lemma Cohomology of Weyl functors}
    In the above situation,
    let $\alpha$, $\beta$ $\in$ $\overline{B_{k,n}}$. Then 
    \begin{enumerate}
	\item \[\RHom_{\mathcal{O}_{\Gr(k,n)_{K}}} (\Weyl^{\alpha}\!\Tautological,\Weyl^{\beta}\!\Tautological) = 0       \]
	for $\beta$ $\prec$ $\alpha$.
	\item  \[\RHom_{\mathcal{O}_{\Gr(k,n)_{K}}} (\Weyl^{\alpha}\!\Tautological,\Weyl^{\alpha}\!\Tautological) = K.       \]
\end{enumerate}
\end{Lemma}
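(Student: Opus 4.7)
The plan is to reduce the statement directly to Theorem \ref{Theorem cohomological properties of Dual Tautological} using the duality relation of Lemma \ref{Lemma Relation between Schur and Weyl}, together with the general fact that for locally free sheaves $\E$, $\F$ one has $\RHom(\E^{\vee}, \F^{\vee}) = \RHom(\F, \E)$.

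First, I would rewrite the Weyl bundles in terms of Schur bundles: by Lemma \ref{Lemma Relation between Schur and Weyl} applied to $\Tautological$,
\[ \Weyl^{\alpha}\!\Tautological = (\Schur^{\alpha}\!\Tautological^{\vee})^{\vee}, \qquad \Weyl^{\beta}\!\Tautological = (\Schur^{\beta}\!\Tautological^{\vee})^{\vee}. \]
Since $\Schur^{\alpha}\!\Tautological^{\vee}$ and $\Schur^{\beta}\!\Tautological^{\vee}$ are vector bundles by Lemma \ref{Lemma Schur and Weyl are vector bundles}, the standard duality isomorphism for locally free sheaves gives
\[ \RHom_{\mathcal{O}_{\Gr(k,n)_{K}}}(\Weyl^{\alpha}\!\Tautological, \Weyl^{\beta}\!\Tautological) \;=\; \RHom_{\mathcal{O}_{\Gr(k,n)_{K}}}(\Schur^{\beta}\!\Tautological^{\vee}, \Schur^{\alpha}\!\Tautological^{\vee}). \]

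Next, I would translate the ordering. By definition, $\beta \prec \alpha$ in $\overline{B_{k,n}}$ is the same as $\alpha \prec \beta$ in $B_{k,n}$. Under the identification above, the question of vanishing of $\RHom(\Weyl^{\alpha}\!\Tautological, \Weyl^{\beta}\!\Tautological)$ becomes the question of vanishing of $\RHom(\Schur^{\beta}\!\Tautological^{\vee}, \Schur^{\alpha}\!\Tautological^{\vee})$, and the hypothesis $\alpha \prec \beta$ in $B_{k,n}$ is precisely what is needed to invoke part (1) of Theorem \ref{Theorem cohomological properties of Dual Tautological} (with the roles of $\alpha$ and $\beta$ swapped). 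For part (2) of the lemma, setting $\alpha = \beta$ yields
\[ \RHom(\Weyl^{\alpha}\!\Tautological, \Weyl^{\alpha}\!\Tautological) = \RHom(\Schur^{\alpha}\!\Tautological^{\vee}, \Schur^{\alpha}\!\Tautological^{\vee}) = K \]
by part (2) of the same theorem.

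There is no real obstacle here; the entire argument is a formal consequence of the duality $\Weyl^{\alpha}\!\VectorV = (\Schur^{\alpha}\!\VectorV^{\vee})^{\vee}$ together with the careful bookkeeping that dualizing reverses the order of arguments in $\RHom$, which in turn matches precisely with passing from the ordering on $B_{k,n}$ to its opposite on $\overline{B_{k,n}}$. The only point worth double-checking is that the conventions on Schur functors in Theorem \ref{Theorem cohomological properties of Dual Tautological} agree with ours (a discrepancy already flagged in the proof of Lemma \ref{Lemma Relation between Schur and Weyl} regarding the primed convention of \cite{Akin-Buchsbaum-Weyman}), but since the ordering $\prec$ is defined intrinsically on partitions in the box $B_{k,n}$ this causes no trouble.
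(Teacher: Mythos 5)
Your proof is correct and takes essentially the same route as the paper's: both use Lemma~\ref{Lemma Relation between Schur and Weyl} to rewrite $\Weyl^{\alpha}\!\Tautological=(\Schur^{\alpha}\!\Tautological^{\vee})^{\vee}$ and then reduce to Theorem~\ref{Theorem cohomological properties of Dual Tautological} after flipping the order of arguments (the paper passes through $\mathcal{H}om$ and tensor explicitly, whereas you invoke $\RHom(\E^{\vee},\F^{\vee})\simeq\RHom(\F,\E)$ directly, which amounts to the same thing). Incidentally, the last line of the paper's displayed computation contains a typo — it should read $\RHomi_{\mathcal{O}_{\Gr(k,n)_{K}}}(\Schur^{\beta}\!\Tautological^{\vee},\Schur^{\alpha}\!\Tautological^{\vee})$ — and your bookkeeping of the order reversal is the correct one.
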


\begin{proof}
 Since all the sheaves involved are locally free (Lemma \ref{Lemma Schur and Weyl are vector bundles})
\begin{align*}
    \RHomi_{\mathcal{O}_{\Gr(k,n)_{K}}} (\Weyl^{\alpha}\!\Tautological,\Weyl^{\beta}\!\Tautological) &= \Cohomology^i (\mathcal{H}om(\Weyl^{\alpha}\!\Tautological,\Weyl^{\beta}\!\Tautological)) \\
                                                                                                        &= \Cohomology^i (\Schur^{\alpha}\!\Tautological^{\vee} \otimes \Weyl^{\beta}\!\Tautological) \\
                                                                                                        &= \RHomi_{\mathcal{O}_{\Gr(k,n)_{K}}} (\Schur^{\beta}\!\Tautological^{\vee},\Schur^{\beta}\!\Tautological^{\vee}) \\
\end{align*}
We have used Lemma \ref{Lemma Relation between Schur and Weyl} extensively throughout this computation.
\end{proof}

\begin{Theorem}
    \label{Theorem Derived Category of Grassmannian over algebraic stack for Dqc}
    Let $\VectorV$ be a rank $n$ vector bundle on an algebraic stack $X$ and let
    \[     \pi :  \Gr(k,\VectorV)_{X} \rightarrow X                                         \]
    be the projection.
    We denote by $\Phi_\alpha$ the finite Fourier-Mukai functor induced by the finite Fourier-Mukai transform 
    $(\pi, 1, \Weyl^\alpha \Tautological)$ so that 
    \begin{eqnarray*}
        \Phi_\alpha :& \Dqc(X) \rightarrow \Dqc(\Gr(k,\VectorV)_{X})      \\
        \Phi_\alpha(\E) =&  \Left\!\pi^*(\E) \otimes \Weyl^{\alpha}\!\Tautological. 
    \end{eqnarray*}
    Then there is  a semiorthogonal decomposition of $\Dqc(\Gr(k,\VectorV)_{X})$ into right admissible categories
    \[      \Dqc(\Gr(k,\VectorV)_{X}) = \langle \{ \IM(\Phi_{\alpha}) \}_{\alpha \in \overline{B_{k,n}}}   \rangle                    \]                                                  
\end{Theorem}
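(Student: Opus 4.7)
The plan is to use conservative descent to reduce to the case of $\Gr(k,n)$ over an affine base, then to establish semiorthogonality and full-faithfulness via a direct cohomological computation, and finally to establish generation via an ample-type classical generator combined with the Cauchy-Littlewood formula. Concretely, by Theorem \ref{Lemma Geometric Conservative Descent Theorem} the formation of $\Phi_\alpha$, the semiorthogonality of its essential images, and the generation statement all descend through faithfully flat base changes of $X$. Choosing a smooth affine atlas of $X$ that also trivializes $\VectorV$ reduces the problem to the case $X = \Spec(R)$ with $\VectorV = \mathcal{O}_X^{\oplus n}$, so that $\Gr(k,\VectorV)_X = \Gr(k,n)_R$.

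For semiorthogonality and full-faithfulness, the projection formula together with Lemma \ref{Lemma Relation between Schur and Weyl} gives
\[ \RHom(\Phi_\alpha(\E),\Phi_\beta(\F)) = \RHom_{\Spec(R)}(\E,\F \otimes \Right\!\pi_*(\Schur^\alpha\!\Tautological^\vee \otimes \Weyl^\beta\!\Tautological)), \]
so it suffices to compute $\Right\!\pi_*(\Schur^\alpha\!\Tautological^\vee \otimes \Weyl^\beta\!\Tautological)$. The integrand is a vector bundle by Lemma \ref{Lemma Schur and Weyl are vector bundles}, and $\pi$ is smooth and proper, so cohomology and base change reduces the computation to the residue fields of $\Spec(R)$, where Lemma \ref{Lemma Cohomology of Weyl functors} gives vanishing when $\beta \prec \alpha$ and gives the structure sheaf concentrated in degree zero when $\alpha = \beta$, with the generator of the latter identified canonically via the adjunction unit. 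This simultaneously yields semiorthogonality of the sequence indexed by $\overline{B_{k,n}}$ and full-faithfulness of each $\Phi_\alpha$; right admissibility then follows from Lemma \ref{Lemma Essential Images of Fourier Mukai Functors are admissible}.

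For generation, Lemma \ref{Lemma Ample Line Bundle Generator} applied to the ample line bundle $\det \Quotient$ yields a vector bundle $G$ classically generating $\Dqc(\Gr(k,n)_R)$, and by Lemma \ref{Lemma Generator contained in Semiorthogonal Sequence implies Semiorthogonal Decomposition} it suffices to check that $G \in \mathcal{S} := \langle \IM(\Phi_\alpha)\rangle_{\alpha \in \overline{B_{k,n}}}$. The Universal Cauchy-Littlewood Formula (Theorem \ref{Theorem Universal Cauchy Littlewood Formula}) applied to $(\pi^*\VectorV,\Tautological)$ endows each exterior power $\wedge^m(\pi^*\VectorV \otimes \Tautological)$ with a finite filtration whose associated graded is $\bigoplus_{|\alpha|=m} \Schur^{\alpha'}\!(\pi^*\VectorV) \otimes \Weyl^\alpha\!\Tautological$. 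Since $\pi^*\VectorV$ is trivial on $X$, each graded piece with $\alpha \in B_{k,n}$ is canonically of the form $\Phi_\alpha(\Schur^{\alpha'}\!\VectorV)$ and hence lies in $\mathcal{S}$. Combining these filtrations with universal Koszul/Lascoux-type resolutions that express the remaining $\Weyl^\alpha\!\Tautological$ (those with $\alpha_1 > n-k$) as complexes of Weyl twists indexed by $B_{k,n}$ places the ample generator $G$ inside $\mathcal{S}$.

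The hard part will be the generation step. Cauchy-Littlewood produces filtrations whose graded pieces include $\Weyl^\alpha\!\Tautological$ for all partitions $\alpha$ with $\ell(\alpha) \leq k$, but those with $\alpha_1 > n-k$ fall outside $B_{k,n}$ and are not themselves in $\mathcal{S}$. Expressing such extra terms universally over $\Spec(\Integers)$ as complexes built from $\{\Weyl^\alpha\!\Tautological\}_{\alpha \in B_{k,n}}$, and chaining the resulting identifications to bring an ample-line-bundle generator into $\mathcal{S}$, is the technical heart of the argument; this is essentially where one recovers Efimov's integral generation result from the characteristic-$p$ exceptional collection of Buchweitz, Leuschke, and Van den Bergh via conservative descent.
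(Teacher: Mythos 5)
Your reduction to the affine case via conservative descent, and your treatment of semiorthogonality and full-faithfulness via the projection formula, cohomology and base change, and Lemma \ref{Lemma Cohomology of Weyl functors}, match the paper's argument (Lemmas \ref{Lemma Pushforward of Hom of Weyl with same partition alpha}--\ref{Lemma Cohomological properties of Phi}). The generation step, however, has a genuine gap that you yourself flag but do not close, and the specific workaround you gesture at is precisely what the paper's choice of Cauchy--Littlewood pairing is designed to avoid.

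Concretely, applying Theorem \ref{Theorem Universal Cauchy Littlewood Formula} to the pair $(\pi^*\VectorV,\Tautological)$ is the wrong move: $\pi^*\VectorV$ has rank $n$, so the surviving graded pieces $\Schur^{\alpha'}(\pi^*\VectorV)\otimes \Weyl^\alpha\Tautological$ in the filtration of $\bigwedge^m(\pi^*\VectorV\otimes\Tautological)$ range over all $\alpha$ with $\ell(\alpha)\le k$ and $\alpha_1\le n$, which is strictly larger than $B_{k,n}$ (where $\alpha_1\le n-k$). Dealing with the extra terms would require integral Lascoux-type resolutions expressing $\Weyl^\alpha\Tautological$ for $\alpha_1>n-k$ in terms of those with $\alpha\in B_{k,n}$; this is a serious piece of work and you have not supplied it. In addition, you never explain how the objects $\bigwedge^m(\pi^*\VectorV\otimes\Tautological)$ are connected to the classical generator $G$ furnished by Lemma \ref{Lemma Ample Line Bundle Generator}: without some device that expresses $G$ as an iterated extension of things built from these wedges, the filtration by itself proves nothing about $G\in\mathcal{S}$.

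The paper sidesteps both problems simultaneously by working on the product $\Gr(k,n)_R\times\Gr(k,n)_R$ and using the resolution of the diagonal (Lemmas \ref{Lemma image of dual of the section} and \ref{Lemma fullness of the semiorthogonal decomposition}). The Koszul complex on the section of $\Tautological^\vee\boxtimes\Quotient$ resolves $\Delta_*\mathcal{O}$, and one applies the integration functor $\Psi_{-}(\Line)$ to this resolution; that is the bridge from the ample generator to the filtration pieces. Crucially, the Cauchy--Littlewood formula is applied to $\bigwedge^i(\Tautological^\vee\boxtimes\Quotient)$, whose two factors have ranks $k$ and $n-k$. After dualizing, the graded pieces are $\Weyl^\alpha\Tautological\boxtimes\Schur^{\alpha'}\Quotient^\vee$, and $\Weyl^\alpha\Tautological$ (rank $k$ input) kills $\ell(\alpha)>k$ while $\Schur^{\alpha'}\Quotient^\vee$ (rank $n-k$ input) kills $\alpha_1>n-k$, so the only surviving partitions are exactly those in $B_{k,n}$ and no extra resolutions are needed. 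Each surviving piece feeds through the flat base change and projection formula to give $\Phi_\alpha\bigl(\Right\!\pi_*(\Line\otimes\Schur^{\alpha'}\Quotient^\vee)\bigr)$, placing $\Line$ inside $\langle\{\IM(\Phi_\alpha)\}\rangle$. You should replace your generation argument with this diagonal-resolution argument; as written, your proof does not establish full generation.
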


\begin{proof} The
 $\Phi_\alpha$ are fully faithful Fourier Mukai functors by Lemma \ref{Lemma Cohomological properties of Phi}. The same lemma also
 implies that their essential images are semiorthogonal. Moreover, this is a semiorthogonal decomposition by conservative descent
(Theorem \ref{Lemma Geometric Conservative Descent Theorem}) since their base change to an affine scheme gives a semiorthogonal
decomposition by Lemma \ref{Lemma fullness of the semiorthogonal decomposition}.
\end{proof}

\begin{Corollary}
    \label{Corollary Derived Category of Grassmannian over algebraic stack for D perfect}
    In the setting of the theorem, 
there are semiorthogonal decompositions of the derived categories $\D_p(\Gr(k,\E)_{X})$, $\D^{lb}_{pc}(\Gr(k,\E)_{X})$ and
 $\D_{sg}(\Gr(k,\E)_{X})$ into right
admissible subcategories given by essential images of the induced fully faithful functors ( for $\alpha \in \overline{B_{k,n}}$)
\[   \Phi_{\alpha}^p : \D_p(X) \rightarrow \D_p(\Gr(k,\E)_{X})  \] 
\[   \Phi_{\alpha}^{pc} :  \D^{lb}_{pc}(X) \rightarrow \D^{lb}_{pc}(\Gr(k,\E)_{X}) \]
\[   \Phi_{\alpha}^{sg} :  \D_{sg}(X) \rightarrow \D_{sg}(\Gr(k,\E)_{X})                                                              \]                             
respectively.                                                        
\end{Corollary}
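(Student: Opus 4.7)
The plan is to recognize this corollary as a direct transport of the decomposition in Theorem~\ref{Theorem Derived Category of Grassmannian over algebraic stack for Dqc} along the general machinery of Lemma~\ref{Lemma semiorthogonal decomposition of Dqc gives semiorthogonal decomposition of other derived categories}. All the hard work has already been done in building the Fourier--Mukai transforms and proving the $\Dqc$-decomposition; what remains is simply to check that the hypotheses of the lemma are satisfied in the present setting.

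First I would invoke Theorem~\ref{Theorem Derived Category of Grassmannian over algebraic stack for Dqc}, which provides, for each $\alpha \in \overline{B_{k,n}}$, a fully faithful Fourier--Mukai functor $\Phi_\alpha : \Dqc(X) \to \Dqc(\Gr(k,\VectorV)_X)$ arising from the finite Fourier--Mukai transform $(\pi, 1, \Weyl^\alpha \Tautological)$, together with a semiorthogonal decomposition
\[
\Dqc(\Gr(k,\VectorV)_X) = \langle \{\IM(\Phi_\alpha)\}_{\alpha \in \overline{B_{k,n}}} \rangle
\]
into right admissible subcategories (right admissibility of each $\IM(\Phi_\alpha)$ was already observed in Lemma~\ref{Lemma Essential Images of Fourier Mukai Functors are admissible}). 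Taking $C = X$ (or equivalently $C = \Spec(\Integers)$) as the base stack, the ambient algebraic stack being $\Gr(k,\VectorV)_X$ and the source stacks all being $X$, these are exactly the hypotheses of Lemma~\ref{Lemma semiorthogonal decomposition of Dqc gives semiorthogonal decomposition of other derived categories}.

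Applying that lemma then immediately produces the induced fully faithful functors $\Phi_\alpha^p$, $\Phi_\alpha^{pc}$ and $\Phi_\alpha^{sg}$ on the subcategories of perfect complexes, pseudo-coherent complexes with locally bounded cohomology, and singularity categories respectively, and it gives the claimed semiorthogonal decompositions
\[
\D_p(\Gr(k,\VectorV)_X) = \langle \{\IM(\Phi_\alpha^p)\}_{\alpha \in \overline{B_{k,n}}} \rangle,
\]
and the analogous statements for $\D^{lb}_{pc}$ and $\D_{sg}$, each into right admissible subcategories.

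There is no genuine obstacle here; the only point worth verifying, and this is already packaged inside \cite[Theorem 6.2]{Bergh-Schnurer-Conservative-Descent}, is that our Fourier--Mukai functors $\Phi_\alpha$ have kernels $\Weyl^\alpha \Tautological$ which are locally free of finite rank (Lemma~\ref{Lemma Schur and Weyl are vector bundles}), so that the preservation of perfectness, pseudo-coherence and bounded cohomology under the Fourier--Mukai formalism goes through without modification. Consequently the whole statement follows by quotation.
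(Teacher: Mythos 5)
Your proposal matches the paper's proof exactly: the corollary is obtained by combining Theorem~\ref{Theorem Derived Category of Grassmannian over algebraic stack for Dqc} with Lemma~\ref{Lemma semiorthogonal decomposition of Dqc gives semiorthogonal decomposition of other derived categories}, which is precisely the two-citation argument the paper gives. Your extra remark about the kernel $\Weyl^\alpha\Tautological$ being locally free is a harmless sanity check already subsumed in the cited lemma.
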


\begin{proof}
Theorem \ref{Theorem Derived Category of Grassmannian over algebraic stack for Dqc} and 
Lemma \ref{Lemma semiorthogonal decomposition of Dqc gives semiorthogonal decomposition of other derived categories}.
\end{proof}

\section{Ingredients of the Proof}

\subsection{Cohomology}

\begin{Lemma}
\label{Lemma Pushforward of Hom of Weyl with same partition alpha}
Consider a fixed affine scheme $\Spec(R)$.
Let $\Tautological$ be the tautological bundle over the Grassmannian
\[ \pi :\Gr(k,n)_R \rightarrow \Spec(R). \]
Then, for any $\alpha$ in $\overline{B_{k,n}}$,
\[  \Right\!\pi_{*}(\RsheafHom(\Weyl^{\alpha}\!\Tautological, \Weyl^{\alpha}\!\Tautological)) \simeq \mathcal{O}_{R} .                    \]
\end{Lemma}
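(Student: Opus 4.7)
The approach is to reduce to the case $R = \Integers$ by derived base change and then apply a fibrewise criterion together with $\mathrm{Pic}(\Spec(\Integers)) = 0$.

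First I would rewrite the sheaf. Since $\Weyl^{\alpha}\!\Tautological$ is a vector bundle (Lemma \ref{Lemma Schur and Weyl are vector bundles}) we have
\[ \mathcal{E} := \RsheafHom(\Weyl^{\alpha}\!\Tautological, \Weyl^{\alpha}\!\Tautological) \simeq (\Weyl^{\alpha}\!\Tautological)^{\vee} \otimes \Weyl^{\alpha}\!\Tautological \simeq \Schur^{\alpha}\!\Tautological^{\vee} \otimes \Weyl^{\alpha}\!\Tautological, \]
using Lemma \ref{Lemma Relation between Schur and Weyl}, and $\mathcal{E}$ is again a vector bundle. Both $\pi$ and $\mathcal{E}$ are obtained by base change from their $\Integers$-analogues along $u : \Spec(R) \rightarrow \Spec(\Integers)$. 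Since $\pi_{\Integers} : \Gr(k,n) \rightarrow \Spec(\Integers)$ is smooth and proper and $\mathcal{E}$ is flat over $\Integers$, tor-independent base change (one of the two morphisms is flat) yields
\[ \Right\!\pi_{R,*}\mathcal{E} \simeq \Left\!u^{*}\Right\!\pi_{\Integers,*}\mathcal{E}, \]
so it is enough to show $\Right\!\pi_{\Integers,*}\mathcal{E} \simeq \mathcal{O}_{\Spec(\Integers)}$.

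Next I would analyse the complex $C := \Right\!\pi_{\Integers,*}\mathcal{E}$, which is perfect on $\Spec(\Integers)$ since $\pi_{\Integers}$ is proper and smooth and $\mathcal{E}$ is a vector bundle. For every residue field $\kappa$ of $\Spec(\Integers)$ (so $\kappa$ is either $\mathbb{F}_{p}$ or $\mathbb{Q}$), tor-independent base change along $\Spec(\kappa) \rightarrow \Spec(\Integers)$ gives
\[ C \otimes^{L}_{\Integers} \kappa \;\simeq\; \RHom_{\mathcal{O}_{\Gr(k,n)_{\kappa}}}(\Weyl^{\alpha}\!\Tautological, \Weyl^{\alpha}\!\Tautological) \;\simeq\; \kappa, \]
where the last isomorphism is Lemma \ref{Lemma Cohomology of Weyl functors} applied with $\alpha = \beta$. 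Thus every derived fibre of $C$ is one-dimensional and concentrated in degree $0$.

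Finally, representing $C$ by a bounded complex of finitely generated free $\Integers$-modules, the vanishing of higher $\mathrm{Tor}^{\Integers}(C, \kappa)$ together with constant fibre rank $1$ forces $C$ to be quasi-isomorphic to a line bundle on $\Spec(\Integers)$ concentrated in degree $0$; since $\mathrm{Pic}(\Spec(\Integers)) = 0$ this line bundle is just $\mathcal{O}_{\Spec(\Integers)}$. Pulling back along $u$ then yields the desired isomorphism $\Right\!\pi_{R,*}\mathcal{E} \simeq \mathcal{O}_{R}$. The main technical point is the derived-fibre (Nakayama) step, but this is entirely routine once the fibrewise identity $\RHom(\Weyl^{\alpha}\!\Tautological, \Weyl^{\alpha}\!\Tautological) = \kappa$ is in hand; an alternative way to pin down the isomorphism explicitly is to push forward the identity section $\mathcal{O} \hookrightarrow \mathcal{E}$, which furnishes a map $\mathcal{O}_{R} \to \Right\!\pi_{R,*}\mathcal{E}$ that is fibrewise non-zero and hence an isomorphism.
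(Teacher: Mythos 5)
Your proof is correct, and its core is the same as the paper's: check the Buchweitz--Leuschke--Van den Bergh vanishing fibrewise and then propagate to the base via cohomology and base change. The packaging, however, differs in two interesting ways. First, you reduce to $\Spec(\Integers)$ by tor-independent base change and then apply the derived-fibre criterion for perfect complexes, whereas the paper works directly over $\Spec(R)$ and invokes Hartshorne III.12.11 together with Nakayama. Your detour through $\Integers$ has a small technical virtue: Hartshorne III.12.11 is stated for a Noetherian base, so the paper's argument as written implicitly needs Noetherian approximation when $R$ is an arbitrary ring, while your reduction lands you on the Noetherian scheme $\Spec(\Integers)$ from the start. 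Second, to trivialize the resulting line bundle the paper pushes forward the identity section of $\mathcal{H}om(\Weyl^{\alpha}\!\Tautological,\Weyl^{\alpha}\!\Tautological)$ and observes that the resulting global section is fibrewise nonzero, whereas you instead cite $\mathrm{Pic}(\Spec(\Integers))=0$; you then flag the identity-section route as an alternative, which is precisely what the paper does. Either trivialization is fine; the identity-section argument is slightly more canonical (it works over any base without first reducing to $\Integers$ and produces a specific isomorphism), while the $\mathrm{Pic}$ argument is shorter once you have already reduced. One small point of bookkeeping worth making explicit in your write-up: the tor-independence of the base-change squares you use is supplied by flatness of $\Gr(k,n)_{\Integers}\to\Spec(\Integers)$, not of $u\colon\Spec(R)\to\Spec(\Integers)$, which need not be flat.
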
 

\begin{proof}
For any point $a$ in $\Spec(R)$, we have by Lemma \ref{Lemma Cohomology of Weyl functors} 
\[      \Cohomology^i(\Gr(k,n)_{R}|_a, \RsheafHom(\Weyl^{\alpha}\!\Tautological, \Weyl^{\alpha}\!\Tautological)|_a) = 0 \] 
for $i$ $>$ $0$ and
\[      \Cohomology^0(\Gr(k,n)_{R}|_a, \RsheafHom(\Weyl^{\alpha}\!\Tautological, \Weyl^{\alpha}\!\Tautological)|_a) = k(a). \] 
Hence by \cite[Theorem III.12.11]{Hartshorne}
\[ \Right^i\!\pi_{*}(\RsheafHom(\Weyl^{\alpha}\!\Tautological, \Weyl^{\alpha}\!\Tautological)) \otimes k(a) \simeq \Cohomology^{i}(\RsheafHom(\Weyl^{\alpha}\!\Tautological, \Weyl^{\alpha}\!\Tautological)|_{a})              \]
which is $0$ for $i$ $>$ $0$ and $k(a)$ for $i$ = $0$. The same theorem gives us that
$\Right^i\!\pi_{*}(\RsheafHom(\Weyl^{\alpha}\!\Tautological, \Weyl^{\alpha}\!\Tautological))$ is locally free. This further implies
\[        \Right^i\!\pi_{*}(\RsheafHom(\Weyl^{\alpha}\!\Tautological, \Weyl^{\alpha}\!\Tautological))  = 0                                                                                                \]
for $i$ $>$ 0 by Nakayama's lemma. Hence,
\[    \Cohomology^{0}( \Spec(R), \Right^0\!\pi_{*}(\RsheafHom(\Weyl^{\alpha}\!\Tautological, \Weyl^{\alpha}\!\Tautological))) \simeq    \Cohomology^{0}( \Gr(k,n)_{R}, \mathcal{H}om(\Weyl^{\alpha}\!\Tautological, \Weyl^{\alpha}\!\Tautological))                                         .     \]
furnishing a global nonvanishing section of $\Right^0\!\pi_{*}(\RsheafHom(\Weyl^{\alpha}\!\Tautological, \Weyl^{\alpha}\!\Tautological))$
corresponding to the global non vanishing section on $\mathcal{H}om(\Weyl^{\alpha}\!\Tautological, \Weyl^{\alpha}\!\Tautological)$
given by the identity map. Hence, $\Right^0\!\pi_{*}(\RsheafHom(\Weyl^{\alpha}\!\Tautological, \Weyl^{\alpha}\!\Tautological))$ is an invertible sheaf 
with a global nonvanishing section and thus isomorphic to the structure sheaf $\mathcal{O}_{R}$. 
\end{proof}

\begin{Lemma}
  \label{Lemma Semi Orthogonality of Fourier Mukai}
  Let  $\beta$ and  $\alpha$ be elements of $\overline{B_{k,n}}$ satisfying $\beta$ $\prec$ 
  $\alpha$. Consider the Grassmannian over an affine scheme
  \[     \pi :     \Gr(k,n)_R \rightarrow   \Spec(R)                                      \]
  and recall the Fourier-Mukai functors $\Phi_\alpha$ defined in Theorem \ref{Theorem Derived Category of Grassmannian over algebraic stack for Dqc}.
  Then for $\E$ $\in$ $\IM(\Phi_{\alpha})$, $\F$ $\in$ $\IM(\Phi_{\beta})$
  \[            \RHom(\E,\F) = 0  .         \]   
\end{Lemma}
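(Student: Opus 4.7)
The plan is to reduce the vanishing statement to the already-known vanishing of $\RHom$ between Weyl functors on the fibers (Lemma \ref{Lemma Cohomology of Weyl functors}), by bootstrapping the argument of Lemma \ref{Lemma Pushforward of Hom of Weyl with same partition alpha} to allow two \emph{different} partitions.

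First I would write $\E = \Left\pi^*\mathcal{A} \otimes \Weyl^{\alpha}\!\Tautological$ and $\F = \Left\pi^*\mathcal{B} \otimes \Weyl^{\beta}\!\Tautological$ with $\mathcal{A},\mathcal{B}\in\Dqc(\Spec R)$. Since $\Weyl^\alpha\!\Tautological$ is locally free (Lemma \ref{Lemma Schur and Weyl are vector bundles}), hence perfect, I can combine the tensor-Hom adjunction with the $(\Left\pi^*,\Right\pi_*)$ adjunction and the projection formula to rewrite
\[
\RHom(\E,\F) \;\simeq\; \RHom_{\Spec R}\!\bigl(\mathcal{A},\, \mathcal{B}\otimes \Right\pi_{*}\RsheafHom(\Weyl^{\alpha}\!\Tautological,\Weyl^{\beta}\!\Tautological)\bigr).
\]
So the statement reduces to the vanishing
\[
\Right\pi_{*}\RsheafHom(\Weyl^{\alpha}\!\Tautological,\Weyl^{\beta}\!\Tautological) \;=\; 0 \qquad (\beta\prec\alpha).
\]

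Next I would prove this vanishing by exactly the cohomology-and-base-change strategy used in Lemma \ref{Lemma Pushforward of Hom of Weyl with same partition alpha}. For every point $a\in\Spec R$, restriction to the fiber $\Gr(k,n)_R|_a \simeq \Gr(k,n)_{k(a)}$ commutes with $\RsheafHom$ (again because the Weyl bundles are locally free), and Lemma \ref{Lemma Cohomology of Weyl functors} applied over the field $k(a)$ gives
\[
\Cohomology^{i}\bigl(\Gr(k,n)_R|_a,\, \RsheafHom(\Weyl^{\alpha}\!\Tautological,\Weyl^{\beta}\!\Tautological)|_a\bigr) \;=\; 0
\]
for every $i\geq 0$, since $\beta\prec\alpha$. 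By \cite[Theorem III.12.11]{Hartshorne}, applied inductively in $i$ as in the proof of Lemma \ref{Lemma Pushforward of Hom of Weyl with same partition alpha}, this forces each $\Right^{i}\pi_{*}\RsheafHom(\Weyl^{\alpha}\!\Tautological,\Weyl^{\beta}\!\Tautological)$ to vanish after tensoring with every residue field, and Nakayama gives the vanishing of these sheaves themselves.

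The main obstacle I anticipate is the bookkeeping for the first step: making sure the adjunction plus projection formula manipulation is valid in the unbounded quasi-coherent setting over a possibly non-noetherian base $\Spec R$. This is fine because $\pi$ is proper, representable, and flat, and the kernel $\Weyl^{\alpha}\!\Tautological$ is a vector bundle, so $\RsheafHom(\Weyl^{\alpha}\!\Tautological,\Weyl^{\beta}\!\Tautological)$ is perfect and the projection formula holds without finiteness hypotheses on $\mathcal{A}$ or $\mathcal{B}$. Once that is in place, the fiberwise input from Lemma \ref{Lemma Cohomology of Weyl functors} does the rest and no further ordering subtleties enter, since $\beta\prec\alpha$ is used only on fibers.
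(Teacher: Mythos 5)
Your proposal is correct and follows essentially the same route as the paper: the same chain of tensor--Hom adjunction, $(\Left\pi^*,\Right\pi_*)$ adjunction and projection formula to reduce to the vanishing of $\Right\pi_{*}\RsheafHom(\Weyl^{\alpha}\!\Tautological,\Weyl^{\beta}\!\Tautological)$, followed by the same cohomology-and-base-change plus Nakayama argument fed by Lemma \ref{Lemma Cohomology of Weyl functors}. Your explicit attention to why the projection formula applies in the unbounded, possibly non-noetherian setting is a point the paper handles only by citation, but the argument is the same.
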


\begin{proof}
We compute 
\begin{align*}
	\RHom(\Phi_{\alpha}(\E'),\Phi_{\beta}(\F')) 
	&= \RHom(\Left\!\pi^{*}(\E') \otimes \Weyl^{\alpha}\!\Tautological, \Left\!\pi^{*}(\F') \otimes \Weyl^{\beta}\!\Tautological)\\
    &\simeq \RHom(\Left\!\pi^{*}(\E'), \RsheafHom( \Weyl^{\alpha}\!\Tautological, \Left\!\pi^{*}(\F') \otimes \Weyl^{\beta}\!\Tautological))\\
    &\simeq \RHom(\Left\!\pi^{*}(\E'), \Left\!\pi^{*}(\F') \otimes \RsheafHom(\Weyl^{\alpha}\!\Tautological, \Weyl^{\beta}\!\Tautological))\\
    &\simeq \RHom(\E', \Right\!\pi_{*}( \Left\!\pi^{*}(\F') \otimes \RsheafHom(\Weyl^{\alpha}\!\Tautological, \Weyl^{\beta}\!\Tautological )))\\
    &\simeq \RHom(\E', \F' \otimes \Right\!\pi_{*}(\RsheafHom(\Weyl^{\alpha}\!\Tautological, \Weyl^{\beta}\!\Tautological )))
\end{align*} 
where we have used \cite[Proposition 2.6.1(ii)]{Derived-Functors-Grothendieck-Duality} for the Hom-Tensor adjunction, 
\cite[Proposition 3.2.3(i)]{Derived-Functors-Grothendieck-Duality} for pushforward pullback adjunction and 
\cite[Proposition 3.9.4]{Derived-Functors-Grothendieck-Duality} for the projection formula. Now applying 
\cite[Theorem III.12.11]{Hartshorne} in a similar way as in lemma \ref{Lemma Pushforward of Hom of Weyl with same partition alpha}, 
we have
\[ \Right^i\!\pi_{*}(\RsheafHom(\Weyl^{\alpha}\!\Tautological, \Weyl^{\beta}\!\Tautological)) \otimes k(a) \simeq \Cohomology^{i}(\RsheafHom(\Weyl^{\alpha}\!\Tautological, \Weyl^{\beta}\!\Tautological)|_{a})   = 0           \]
and $\Right^i\!\pi_{*}(\RsheafHom(\Weyl^{\alpha}\!\Tautological, \Weyl^{\beta}\!\Tautological))$ is locally free. Thus $\Right^i\!\pi_{*}(\RsheafHom(\Weyl^{\alpha}\!\Tautological, \Weyl^{\beta}\!\Tautological))$ $\simeq$ 0
by Nakayama. From the previous computation,
\[ \RHom(\Phi_{\alpha}(\E'),\Phi_{\beta}(\F')) = 0. \]
\end{proof}

\begin{Lemma}
    \label{Lemma Fourier Mukai are fully faithful}
    With the notation as in Lemma \ref{Lemma Semi Orthogonality of Fourier Mukai}, we have
    \[    \RHom(\Phi_{\alpha}(\E'),\Phi_{\alpha}(\F')) = \RHom(\E',\F')                                                                    \]
    for $\E'$ and $F'$ in $\Dqc(\Spec(R))$.
\end{Lemma}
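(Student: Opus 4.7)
The proof should mirror the chain of adjunctions carried out in Lemma \ref{Lemma Semi Orthogonality of Fourier Mukai} essentially verbatim, with the only difference being the identification of the pushforward of the internal Hom sheaf in the case $\beta = \alpha$. So the plan is to unwind the Fourier--Mukai functors, apply Hom--tensor adjunction, pushforward--pullback adjunction, and the projection formula to reduce to a computation on the base, and then invoke Lemma \ref{Lemma Pushforward of Hom of Weyl with same partition alpha}.

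Concretely, first I would write
\begin{align*}
    \RHom(\Phi_{\alpha}(\E'),\Phi_{\alpha}(\F'))
    &\simeq \RHom(\Left\!\pi^{*}(\E'), \Left\!\pi^{*}(\F') \otimes \RsheafHom(\Weyl^{\alpha}\!\Tautological, \Weyl^{\alpha}\!\Tautological))\\
    &\simeq \RHom(\E', \Right\!\pi_{*}(\Left\!\pi^{*}(\F') \otimes \RsheafHom(\Weyl^{\alpha}\!\Tautological, \Weyl^{\alpha}\!\Tautological)))\\
    &\simeq \RHom(\E', \F' \otimes \Right\!\pi_{*}(\RsheafHom(\Weyl^{\alpha}\!\Tautological, \Weyl^{\alpha}\!\Tautological)))
\end{align*}
using the same three citations to \cite{Derived-Functors-Grothendieck-Duality} invoked in the preceding lemma (Hom--tensor adjunction, pushforward--pullback adjunction, and the projection formula).

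Next, I would invoke Lemma \ref{Lemma Pushforward of Hom of Weyl with same partition alpha}, which gives the identification
\[ \Right\!\pi_{*}(\RsheafHom(\Weyl^{\alpha}\!\Tautological, \Weyl^{\alpha}\!\Tautological)) \simeq \mathcal{O}_R. \]
Substituting this into the expression above yields
\[ \RHom(\Phi_{\alpha}(\E'),\Phi_{\alpha}(\F')) \simeq \RHom(\E', \F' \otimes \mathcal{O}_R) \simeq \RHom(\E',\F'), \]
which is what we wanted.

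There is no significant obstacle here; the real work was already done in Lemma \ref{Lemma Pushforward of Hom of Weyl with same partition alpha} (where the cohomology and base change theorem was combined with Lemma \ref{Lemma Cohomology of Weyl functors} to conclude the pushforward is the structure sheaf). The only thing to double-check is that the relevant base change / projection formula references apply in the generality we need (quasi-compact, quasi-separated, representable morphism between algebraic stacks with the target affine), which is precisely the setting already used in Lemma \ref{Lemma Semi Orthogonality of Fourier Mukai}.
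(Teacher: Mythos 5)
Your proposal is correct and follows essentially the same route as the paper: the identical chain of Hom--tensor adjunction, pushforward--pullback adjunction, and the projection formula, followed by the identification $\Right\!\pi_{*}(\RsheafHom(\Weyl^{\alpha}\!\Tautological, \Weyl^{\alpha}\!\Tautological)) \simeq \mathcal{O}_R$ from Lemma \ref{Lemma Pushforward of Hom of Weyl with same partition alpha}. (If anything, your write-up is cleaner, since the paper's displayed computation contains stray $\beta$'s where $\alpha$ is meant.)
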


\begin{proof}
We perform a similar computation as in the previous lemma
    \begin{align*}
        \RHom(\Phi_{\alpha}(\E'),\Phi_{\alpha}(\F'))
                   &= \RHom(\Left\!\pi^{*}(\E') \otimes \Weyl^{\alpha}\!\Tautological, \Left\!\pi^{*}(\F') \otimes \Weyl^{\alpha}\!\Tautological)\\
                   &\simeq \RHom(\Left\!\pi^{*}(\E'), \Left\!\pi^{*}(\F') \otimes \RsheafHom(\Weyl^{\alpha}\!\Tautological, \Weyl^{\alpha}\!\Tautological))\\
                   &\simeq \RHom(\E', \Right\!\pi_{*}( \Left\!\pi^{*}(\F') \otimes \RsheafHom(\Weyl^{\alpha}\!\Tautological, \Weyl^{\beta}\!\Tautological )))\\
                   &\simeq \RHom(\E', \F' \otimes \Right\!\pi_{*}(\RsheafHom(\Weyl^{\alpha}\!\Tautological, \Weyl^{\beta}\!\Tautological )))\\
                   &\simeq \RHom(\E',\F'). 
    \end{align*}
    where the last equality follows from lemma \ref{Lemma Pushforward of Hom of Weyl with same partition alpha}.
\end{proof}

We now collect all these properties into one lemma

\begin{Lemma}
    \label{Lemma Cohomological properties of Phi}
    With notation as in Theorem \ref{Theorem Derived Category of Grassmannian over algebraic stack for Dqc}, the $\Phi_{\alpha}$
    are finite Fourier-Mukai functors in the sense of Definition \ref{Definition Fourier Mukai Transform}. Moreover, they are
    fully faithful and their essential images $\{\IM(\Phi_{\alpha}) \}_{\alpha \in \overline{B_{k,n}}}$ with ordering as in
    $\overline{B_{k,n}}$ is a semiorthogonal sequence.
\end{Lemma}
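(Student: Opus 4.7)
The plan is to verify the three claims in order, using the results already established for the Grassmannian over an affine scheme together with the geometric conservative descent theorem (Theorem \ref{Lemma Geometric Conservative Descent Theorem}).

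First I would check that $(\pi, 1, \Weyl^{\alpha}\Tautological)$ is a finite Fourier-Mukai transform in the sense of Definition \ref{Definition Fourier Mukai Transform}. The projection $\pi : \Gr(k,\VectorV)_X \to X$ is representable (Theorem \ref{Representability of the Grassmannian functor}), quasi-compact, quasi-separated, proper, and perfect since the Grassmann bundle is smooth and projective over $X$. The identity map is trivially representable, proper, finite, and perfect, with $1^{\times}(\mathcal{O}) = \mathcal{O}$ perfect. The kernel $\Weyl^{\alpha}\Tautological$ is a vector bundle by Lemma \ref{Lemma Schur and Weyl are vector bundles}, so it lies in $\Dqc(\Gr(k,\VectorV)_X)$.

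For full faithfulness and the semiorthogonal sequence property, I would choose a smooth cover $u : U \to X$ from a disjoint union of affine schemes $U = \bigsqcup_i \Spec(R_i)$ on which $\VectorV|_U$ trivializes; this is always possible since any vector bundle is locally free in the smooth topology. After base change along $u$, the Grassmann bundle becomes $\Gr(k,n)_U$, the tautological bundle pulls back to the standard tautological bundle, and each $\Phi_\alpha$ restricts componentwise to the Fourier-Mukai functor analyzed in the preceding affine lemmas. Lemma \ref{Lemma Fourier Mukai are fully faithful} yields full faithfulness of the base-changed functors, while Lemma \ref{Lemma Semi Orthogonality of Fourier Mukai} yields that their essential images form a semiorthogonal sequence with respect to the ordering on $\overline{B_{k,n}}$. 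Applying Theorem \ref{Lemma Geometric Conservative Descent Theorem} then descends both properties to the original Fourier-Mukai functors on $X$.

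The principal conceptual step is the descent reduction; no serious obstacle arises, since the hard cohomological input, namely Theorem \ref{Theorem cohomological properties of Dual Tautological} of \cite{Grassmannian-arbitrary-characteristic} passed through Lemma \ref{Lemma Cohomology of Weyl functors}, has already been absorbed into the affine lemmas. The lemma thus functions as a repackaging of the preceding cohomological computations into the categorical form required by Theorem \ref{Theorem Derived Category of Grassmannian over algebraic stack for Dqc}.
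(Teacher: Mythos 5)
Your proposal is correct and follows essentially the same route as the paper: verify the conditions of Definition \ref{Definition Fourier Mukai Transform} directly for $(\pi,\Identity,\Weyl^{\alpha}\Tautological)$, then obtain full faithfulness and semiorthogonality by applying Theorem \ref{Lemma Geometric Conservative Descent Theorem} to Lemmas \ref{Lemma Fourier Mukai are fully faithful} and \ref{Lemma Semi Orthogonality of Fourier Mukai} over an affine, trivializing, faithfully flat cover. Your write-up is in fact slightly more explicit than the paper's about the choice of cover and the identification of the base-changed functors with the affine ones.
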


\begin{proof}
We have that $(\pi, \Identity, \Weyl^\alpha\Tautological)$ 
is a finite Fourier-Mukai transform since $\pi$ is proper, flat and of finite presentation and therefore perfect. We denote by $\Phi_{\alpha}$ the 
corresponding Fourier-Mukai functor. They are fully faithful by applying geometric conservative descent as in theorem
\ref{Lemma Geometric Conservative Descent Theorem} to  lemma
\ref{Lemma Fourier Mukai are fully faithful}. Similarly, Semiorthogonality follows from lemma
\ref{Lemma Semi Orthogonality of Fourier Mukai} and \ref{Lemma Geometric Conservative Descent Theorem}.
\end{proof}

\subsection{Generation}
In this subsection, we use the standard resolution of the diagonal argument to produce generators of the derived category of a 
Grassmannian. This result is well known, see for example \cite[Remark 7.6]{efimov}. 

We have the the following pullback diagram:

\begin{equation}
    \label{Diagram Product of Grassmannians}
    \begin{tikzcd}
        \Gr(k,n)_{R} \times \Gr(k,n)_{R} \arrow[d, "p"'] \arrow[r, "q"] & \Gr(k,n)_{R} \arrow[d, "\pi"] \\
        \Gr(k,n)_{R} \arrow[r, "\pi"]                                                 & 
        \Spec(R)             \\
        \end{tikzcd}
\end{equation}
for the Grassmannian over an affine scheme $\Spec(R)$.
\begin{Notation}
	For sheaves $\E$ and $\F$ on $\Gr(k,n)_{R}$, we denote by  $\E \boxtimes \F$ the sheaf $p^{*}(\E) \otimes q^{*}(\F)$ on 
    $\Gr(k,n)_{R} \times \Gr(k,n)_{R}$.
\end{Notation}

We define a section $s$ of $\Tautological^{\vee} \boxtimes \Quotient$ by the composition of the natural maps
\[ p^*\Tautological \rightarrow p^*\mathcal{O}_{\Gr(k,n)_{R}}^{\oplus n} \simeq q^*\mathcal{O}_{\Gr(k,n)_{R}}^{\oplus n} \rightarrow q^*\Quotient.   \]    

\begin{Lemma}
    \label{Lemma image of dual of the section}
        The image of $s^{\vee}$ is surjective onto the ideal sheaf of the diagonal.
\end{Lemma}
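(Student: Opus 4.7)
The plan is to reduce the statement to a stalkwise verification that splits into two cases: on the diagonal $\Delta$, handled by an explicit calculation in the standard affine charts of $\Gr(k,n)_R \times \Gr(k,n)_R$, and off $\Delta$, handled by a one-dimensional rank argument. First I would note that because $s$ is the composition $p^{*}\Tautological \to p^{*}\mathcal{O}^{\oplus n} = q^{*}\mathcal{O}^{\oplus n} \to q^{*}\Quotient$, at a geometric point $(V, V) \in \Delta$ it becomes $V \hookrightarrow k^n \twoheadrightarrow k^n/V$, which is zero. Hence $s$ vanishes on $\Delta$ and $\IM(s^\vee) \subseteq I_\Delta$; the task is to establish equality.

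To verify equality near the diagonal, I would work on each chart $U_I \times U_I$, using the bases provided by Remark \ref{Remark Description of Open Sets of Grassmannian} with coordinates $\{x_{lm}\}$ on the first factor and $\{y_{lm}\}$ on the second. Then $p^{*}\Tautological$ has local frame $\{a_l = e_{j_l} + \sum_m x_{lm} e_{i_m}\}$, while $q^{*}\Quotient$ is trivialized by $\{\bar{e}_{i_m}\}$ subject to the relation $\bar{e}_{j_l} = -\sum_m y_{lm}\, \bar{e}_{i_m}$. A direct calculation produces $s(a_l) = \sum_m (x_{lm} - y_{lm})\, \bar{e}_{i_m}$, and pairing with the dual frame $\{e_{i_m}^{*} - \sum_l y_{lm}\, e_{j_l}^{*}\}_m$ of $q^{*}\Quotient^{\vee}$ yields matrix entries $x_{lm} - y_{lm}$ for $s^\vee$. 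Since these entries generate $I_\Delta$ on $U_I \times U_I$, we obtain $\IM(s^\vee) = I_\Delta$ on the open neighborhood $\bigcup_I (U_I \times U_I)$ of $\Delta$.

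To handle the complement of $\Delta$, note that at any point $(V_1, V_2)$ with $V_1 \neq V_2$ we have $V_1 \not\subset V_2$ (both being $k$-dimensional), so $s$ does not vanish there. Because $\mathcal{O}$ is a line bundle, the fiber of $s^\vee$ at such a point is a nonzero $k$-linear functional, hence surjective, and Nakayama forces $\IM(s^\vee) = \mathcal{O} = I_\Delta$ in a neighborhood. Combining with the previous paragraph gives $\IM(s^\vee) = I_\Delta$ globally.

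The main obstacle is the coordinate bookkeeping in the local computation: one must carefully identify the bases of $\Tautological$, $\Quotient$, and $\Quotient^{\vee}$ across the two factors and check that the cancellation in $s(a_l)$ really produces the differences $x_{lm} - y_{lm}$ rather than some twisted variant. Once that is in hand, the remaining steps are either formal (vanishing on $\Delta$) or a one-line rank argument.
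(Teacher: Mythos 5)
Your proposal is correct and follows essentially the same route as the paper: a stalkwise check, with the explicit computation of the matrix entries $x_{lm}-y_{lm}$ of $s^{\vee}$ on the charts $U_I\times U_I$ handling the diagonal, and the non-vanishing of $s$ at $(V_1,V_2)$ with $V_1\neq V_2$ handling the complement. Your off-diagonal argument (two distinct $k$-dimensional subspaces cannot be nested) is a slightly more direct phrasing of the paper's contradiction via the moduli interpretation, but it is the same idea.
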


\begin{proof}
We have to prove that the sequence
\[   \Tautological \boxtimes \Quotient^{\vee}  \xrightarrow[]{s^{\vee}}   \mathcal{O}_{\Gr(k,n)_{R} \times \Gr(k,n)_{R}}      \rightarrow \Delta_* \mathcal{O}_{\Gr(k,n)_{R}} \rightarrow 0 \] 
on $\Gr(k,n)_{R} \times \Gr(k,n)_{R}$ is exact where
\[         \Delta : \Gr(k,n)_{R} \rightarrow  \Gr(k,n)_{R} \times \Gr(k,n)_{R} \]                                            
is the diagonal embedding. It is enough to check this at the level of stalks. We first check at a point $(x,x)$ in 
$\Gr(k,n)_{R} \times \Gr(k,n)_{R}$. This point $(x,x)$ is contained in an open affine space $U_I \times U_I$ and furthermore we can 
assume $I$ = $[n-k]$. By Remark \ref{Remark Description of Open Sets of Grassmannian}, 
\[ U_I \times U_I = k[\{x_{1,ij}\}_{\substack{1 \leq i \leq n-k \\ n-k < j \leq n}},\{x_{2,ij}\}_{\substack{1 \leq i \leq n-k \\ n-k < j \leq n}}] \]
and $p^*\Tautological$ is generated by elements $\{s_j\}_{n-k < j \leq n}$
\[    s_j = e_j +  \sum_{i=1}^{n-k}  x_{1,ji} e_i.                                        \]
On the other hand $q^*\Quotient$ is generated by elements $\{q^*_i\}_{1 \leq i \leq n-k}$
\[    t^*_i =  e^*_i -  \sum_{j=n-k+1}^n x_{2,ji} e^*_j.                            \]
Hence image of $s^{\vee}$ is generated by 
\[      t^*_i(s_j) =  (e^*_i -  \sum_{j=n-k+1}^n x_{2,ji} e^*_j)(e_j +  \sum_{i=1}^{n-k}  x_{1,ji} e_i) = x_{1,ji} - x_{2,ji}. \]                                                                      
Thus image of $s^{\vee}$ is the ideal sheaf of the diagonal 
\[           \mathcal{I}_{\Delta} = \{ x_{1,ji} - x_{2,ji}  \}_{\substack{1 \leq i \leq n-k \\ n-k < j \leq n}}.          \]
In the case of a point $*$ = $(x,y)$ with $x \neq y$, we claim $s(x,y)$ is non zero. In this case,
$s^{\vee}$ is surjective onto $(\mathcal{O}_{\Gr(k,n)_{R} \times \Gr(k,n)_{R}})_{(x,y)}$ and we are done since
$(\Delta_* \mathcal{O}_{\Gr(k,n)_{R}})_{x,y}$ = 0. To prove the claim, we assume to the contrary $s(x,y)$ = 0. In that
case we have an exact sequence by the definition of $s$
\[     0 \rightarrow p^*(\Tautological)_{(x,y)} \rightarrow q^*(\mathcal{O}^{\oplus n}_{\Gr(k,n)_{R}})_{(x,y)} \rightarrow q^{*}(\Quotient)_{(x,y)} \rightarrow 0\] 
which fits into a commutative diagram  
\[
\begin{tikzcd}[row sep=2em, column sep=3em]
    0 \arrow[r] & p^*(\Tautological)_{(x,y)} \arrow[r] \arrow[d] & p^*(\mathcal{O}^{\oplus n}_{\Gr(k,n)_{R}})_{(x,y)} \arrow[r] \arrow[d] & p^{*}(\Quotient)_{(x,y)} \arrow[r] \arrow[d, dotted] & 0 \\
    0 \arrow[r] & p^*(\Tautological)_{(x,y)} \arrow[r] & q^*(\mathcal{O}^{\oplus n}_{\Gr(k,n)_{R}})_{(x,y)} \arrow[r] & q^{*}(\Quotient)_{(x,y)} \arrow[r] & 0
\end{tikzcd}
\]
where the rightmost vertical arrow is an isomorphism by the five lemma. We replace $\Gr(k,n)_R \times \Gr(k,n)_{R}$ by the  diagram \ref{Diagram Product of Grassmannians} by
\begin{equation}
    \label{Diagram Product of Grassmannians and points}
    \begin{tikzcd}
        * \arrow[d, "p"'] \arrow[r, "q"] & \Gr(k,n)_{R} \arrow[d, "\pi"] \\
        \Gr(k,n)_{R} \arrow[r, "\pi"]                                                 & 
        \Spec(R)             \\
        \end{tikzcd}
\end{equation}
and over $*$ = $(x,y)$, we have the commutative diagram
\[
\begin{tikzcd}[row sep=2em, column sep=3em]
    p^*(\mathcal{O}^{\oplus n}_{\Gr(k,n)_{R}}) \arrow[r, two heads] \arrow[d, "\simeq"'] & p^{*}(\Quotient) \arrow[d, "\simeq"] \\
    q^*(\mathcal{O}^{\oplus n}_{\Gr(k,n)_{R}}) \arrow[r, two heads] & q^{*}(\Quotient)
\end{tikzcd}
\]
Since $p^{*} \VectorV$ of a vector bundle $\VectorV$ is just the fibre $\VectorV|_x$  at $x$ and $q^*\VectorV$ = $\VectorV|_y$, we 
have the commutative diagram
\[
\begin{tikzcd}[row sep=2em, column sep=3em]
    \mathcal{O}^{\oplus n}_{\Gr(k,n)_{R}}|_x \arrow[r, two heads] \arrow[d, "\simeq"'] & \Quotient|_x \arrow[d, "\simeq"] \\
    \mathcal{O}^{\oplus n}_{\Gr(k,n)_{R}}|_y \arrow[r, two heads] & \Quotient|_y
\end{tikzcd}
\]
which is absurd in case $x$ $\neq$ $y$ by the definition of the Grassmannian functor.
\end{proof}

The following lemma is stated in \cite[remark 7.6]{efimov}.

\begin{Lemma}
    \label{Lemma fullness of the semiorthogonal decomposition}
    With notation as in lemma \ref{Lemma Semi Orthogonality of Fourier Mukai}, the smallest strictly full 
    triangulated subcategory of $\Dqc(\Gr(k,n)_{R})$ containing the set $\{ \IM(\Phi'_{\alpha}) \}_{\alpha \in \overline{B_{k,n}}}$ is
    $\Dqc(\Gr(k,n)_{R})$ itself.
\end{Lemma}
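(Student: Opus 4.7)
The strategy is a resolution-of-the-diagonal argument combined with the Cauchy--Littlewood filtration. By Lemma~\ref{Lemma image of dual of the section}, the cosection $s^\vee : \Tautological \boxtimes \Quotient^\vee \to \mathcal{O}$ has image equal to the ideal sheaf $\mathcal{I}_\Delta$ of the diagonal. Since the rank $k(n-k)$ of $\Tautological \boxtimes \Quotient^\vee$ matches the codimension of $\Delta$, the cosection $s^\vee$ is regular, and the associated Koszul complex
\begin{equation*}
  0 \to \bigwedge^{k(n-k)}(\Tautological \boxtimes \Quotient^\vee) \to \cdots \to \Tautological \boxtimes \Quotient^\vee \to \mathcal{O} \to \Delta_*\mathcal{O}_{\Gr(k,n)_R} \to 0
\end{equation*}
is exact, giving a finite locally free resolution of $\Delta_*\mathcal{O}_{\Gr(k,n)_R}$.

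For any $\F \in \Dqc(\Gr(k,n)_R)$, the identities $p \circ \Delta = q \circ \Delta = \Identity$ yield $\F \simeq \Right\!p_*(q^*\F \otimes \Delta_*\mathcal{O}_{\Gr(k,n)_R})$. Substituting the Koszul resolution for $\Delta_*\mathcal{O}_{\Gr(k,n)_R}$ and working along the exact triangles given by the Koszul differentials, it suffices to show that each $\Right\!p_*\bigl(q^*\F \otimes \bigwedge^m(\Tautological \boxtimes \Quotient^\vee)\bigr)$ lies in the triangulated subcategory generated by $\{\IM(\Phi_\alpha)\}_{\alpha \in \overline{B_{k,n}}}$. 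Applying Theorem~\ref{Theorem Universal Cauchy Littlewood Formula} with $\VectorV = q^*\Quotient^\vee$ and $\VectorW = p^*\Tautological$ (in this order, so that the tautological bundle occupies the Weyl slot) equips the exterior power $\bigwedge^m(\Tautological \boxtimes \Quotient^\vee)$ with a filtration whose associated graded is
\begin{equation*}
\bigoplus_{\substack{\alpha \in B_{k,n} \\ |\alpha| = m}} q^*\Schur^{\alpha'}\Quotient^\vee \otimes p^*\Weyl^\alpha\Tautological,
\end{equation*}
where the sum ranges over $B_{k,n}$ because $\Schur^{\alpha'}$ of a rank $n{-}k$ bundle and $\Weyl^\alpha$ of a rank $k$ bundle both vanish outside this shape constraint.

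Iterated extensions then place $q^*\F \otimes \bigwedge^m(\Tautological \boxtimes \Quotient^\vee)$ in the triangulated hull of the pieces $q^*(\F \otimes \Schur^{\alpha'}\Quotient^\vee) \otimes p^*\Weyl^\alpha\Tautological$. Applying $\Right\!p_*$, the projection formula pulls out $\Weyl^\alpha\Tautological$, and flat base change around Diagram~\ref{Diagram Product of Grassmannians} identifies $\Right\!p_* \circ q^*$ with $\pi^* \circ \Right\!\pi_*$, so each graded piece is sent to
\begin{equation*}
\pi^*\Right\!\pi_*(\F \otimes \Schur^{\alpha'}\Quotient^\vee) \otimes \Weyl^\alpha\Tautological,
\end{equation*}
which by construction lies in $\IM(\Phi_\alpha)$. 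Collating the $m$-level Koszul exact triangles and the $\alpha$-level Cauchy--Littlewood extensions concludes that $\F$ lies in the triangulated subcategory generated by $\{\IM(\Phi_\alpha)\}_{\alpha \in \overline{B_{k,n}}}$. The main delicate point is the bookkeeping between the Koszul filtration and the Cauchy--Littlewood filtration; the crucial combinatorial choice is the ordering $q^*\Quotient^\vee \otimes p^*\Tautological$, since only this ordering forces $\Tautological$ into the Weyl slot, ensuring the outputs lie in $\IM(\Phi_\alpha)$ rather than in images built out of $\Schur^{\alpha'}\Tautological$.
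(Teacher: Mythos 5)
Your proof is correct and follows essentially the same route as the paper: resolve the diagonal by the Koszul complex of $s^{\vee}$, filter the exterior powers via Theorem \ref{Theorem Universal Cauchy Littlewood Formula} so the graded pieces are $\Weyl^{\alpha}\!\Tautological \boxtimes \Schur^{\alpha'}\!\Quotient^{\vee}$, and identify their convolutions with objects of $\IM(\Phi_{\alpha})$ by the projection formula and flat base change along Diagram \ref{Diagram Product of Grassmannians}. The only (harmless) deviations are that you feed $\Quotient^{\vee}$ and $\Tautological$ into the Cauchy--Littlewood formula in the order that puts $\Tautological$ directly in the Weyl slot instead of dualizing the filtration as the paper does, and that you run the convolution against an arbitrary object $\F$ rather than against an ample generator $\Line$ followed by an appeal to Lemma \ref{Lemma Generator contained in Semiorthogonal Sequence implies Semiorthogonal Decomposition}.
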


\begin{proof}
    We obtain a Koszul resolution for $s^{\vee}$ by lemma \ref{Lemma image of dual of the section} given by
    \begin{align*}
        0 \rightarrow  \bigwedge^{k(n-k)} (\Tautological \boxtimes \Quotient^{\vee}) \rightarrow \hdots \rightarrow  &\bigwedge^{3} (\Tautological \boxtimes \Quotient^{\vee})
          \rightarrow   \bigwedge^{2} (\Tautological \boxtimes \Quotient^{\vee})\\              &\rightarrow        \bigwedge^{1} (\Tautological \boxtimes \Quotient^{\vee})
          \rightarrow   \mathcal{O}_{\Gr(k,n)_{R} \times \Gr(k,n)_{R}}      \rightarrow \Delta_* \mathcal{O}_{\Gr(k,n)_{R}} \rightarrow 0.     
    \end{align*}
        Since
     \[\bigwedge^{i} (\Tautological \boxtimes \Quotient^{\vee}) \simeq (\bigwedge^{i} (\Tautological^{\vee} \boxtimes \Quotient))^{\vee} \]
    we can write the above long exact sequence as
    \begin{equation}
        \label{Equation Long Exact Sequence Rearranged Resolution of Diagonal}
        \begin{aligned}
            0 \rightarrow  (\bigwedge^{k(n-k)} (\Tautological^{\vee} \boxtimes \Quotient))^{\vee} \rightarrow \hdots \rightarrow  &\bigwedge^{3} (\Tautological \boxtimes \Quotient^{\vee})
                        \rightarrow   (\bigwedge^{2} (\Tautological^{\vee} \boxtimes \Quotient))^{\vee}\\              &\rightarrow        (\bigwedge^{1} (\Tautological^{\vee} \boxtimes \Quotient))^{\vee}
                        \rightarrow   \mathcal{O}_{\Gr(k,n)_{R} \times \Gr(k,n)_{R}}      \rightarrow \Delta_* \mathcal{O}_{\Gr(k,n)_{R}} \rightarrow 0. 
        \end{aligned}
    \end{equation}
    On the other hand, given an object $\E$ in $\Dqc(\Gr(k,n)_{R})$, we have an exact functor
    \begin{align*}
        \Psi : &\Dqc(\Gr(k,n)_{R} \times \Gr(k,n)_{R}) \rightarrow \Dqc(\Gr(k,n)_{R})\\
               &\F                             \mapsto \Psi_{\F}(\E) := \Right\!p_*(\Left\!q^*\E \otimes_{\Left} \F ).  
    \end{align*}
    Let $\Line$ be an ample line bundle on $\Gr(k,n)_{R}$. Then applying $\Psi$ to the long exact sequence 
    equation \ref{Equation Long Exact Sequence Rearranged Resolution of Diagonal}, we deduce that
    $\Line$ $\simeq$ $\Psi_{\Delta_* \mathcal{O}_{\Gr(k,n)_{R}}}(\Line)$ is contained in the smallest strictly full triangulated category
    containing $\{\Psi_{(\bigwedge^{i} (\Tautological^{\vee} \boxtimes \Quotient))^{\vee}}(\Line)\}_{i \geq 0}$. Utilizing 
    theorem \ref{Theorem Universal Cauchy Littlewood Formula}, we get sequences of short exact sequences for each $i$,
    \begin{gather*}
        0 \rightarrow G_1(\Tautological^{\vee},\Quotient) \rightarrow M_{\alpha_1}(\Tautological^{\vee},\Quotient) \rightarrow G_2(\Tautological^{\vee},\Quotient) \rightarrow 0.\\ 
        0 \rightarrow M_{\alpha_1}(\Tautological^{\vee},\Quotient) \rightarrow M_{\alpha_2}(\Tautological^{\vee},\Quotient) \rightarrow G_3(\Tautological^{\vee},\Quotient) \rightarrow 0.\\ 
        \hdots\\ 
        0 \rightarrow M_{\alpha_p}(\Tautological^{\vee},\Quotient) \rightarrow \bigwedge^{i} (\Tautological^{\vee} \boxtimes \Quotient) \rightarrow G_{p+1}(\Tautological^{\vee},\Quotient) \rightarrow 0. 
    \end{gather*}
    where each $G_i(\Tautological^{\vee},\Quotient)$ is isomorphic to $\Schur^{\alpha}\!\Tautological^{\vee} \boxtimes \Weyl^{\alpha'}\!\Quotient$ 
    for some $\alpha$. We can dualize this short exact sequences since the rightmost terms are flat (lemma \ref{Lemma Schur and Weyl are vector bundles})
    to obtain another collection of short exact sequences 
    \begin{gather*}
        0 \rightarrow (G_2(\Tautological,\Quotient))^{\vee} \rightarrow (M_{\alpha_1}(\Tautological,\Quotient))^{\vee} \rightarrow (G_1(\Tautological,\Quotient))^{\vee} \rightarrow 0.\\ 
        0 \rightarrow (G_3(\Tautological,\Quotient))^{\vee} \rightarrow (M_{\alpha_2}(\Tautological,\Quotient))^{\vee} \rightarrow (M_{\alpha_1}(\Tautological,\Quotient))^{\vee} \rightarrow 0.\\ 
        \hdots\\ 
        0 \rightarrow (G_{p+1}(\Tautological,\Quotient))^{\vee} \rightarrow \left(\bigwedge^{i} (\Tautological \boxtimes \Quotient)\right)^{\vee} \rightarrow (M_{\alpha_p}(\Tautological,\Quotient))^{\vee} \rightarrow 0.  
    \end{gather*}
    where each $G_i(\Tautological^{\vee},\Quotient)^{\vee}$ is isomorphic to $(\Schur^{\alpha}\!\Tautological^{\vee})^{\vee} \boxtimes (\Weyl^{\alpha'}\!\Quotient)^{\vee}$ 
    $\underset{\ref{Lemma Relation between Schur and Weyl}}{\simeq}$ $\Weyl^{\alpha}\!\Tautological \boxtimes \Schur^{\alpha'}\!\Quotient^{\vee}$ for some $\alpha$.
    Applying $\Psi$ to this collection, we deduce $\{\Psi_{(\bigwedge^{i} (\Tautological^{\vee} \boxtimes \Quotient))^{\vee}}(\Line)\}_{i \geq 0}$  and hence, $\Line$
    is contained in the smallest strictly full triangulated subcategory generated by 
    $\{\Psi_{\Weyl^{\alpha}\!\Tautological \boxtimes \Schur^{\alpha'}\!\Quotient^{\vee}}(\Line)\}_{\alpha \in \overline{B_{k,n}}}$. 
    We can restrict $\alpha$ to have fewer than $k$ rows and $n-k$ columns since $\Weyl^{\alpha}\!\Tautological$ vanishes when 
    $\alpha$ exceeds $k$ rows, and $\Schur^{\alpha'}\!\Quotient^{\vee}$ vanishes when $\alpha$ exceeds $n-k$ columns owing to their 
    respective dimensions. We now compute the term $\Psi_{\Weyl^{\alpha}\!\Tautological \boxtimes \Schur^{\alpha'}\!\Quotient^{\vee}}(\Line)$.
    \begin{align*}
        \Psi_{\Weyl^{\alpha}\!\Tautological \boxtimes \Schur^{\alpha'}\!\Quotient^{\vee}}(\Line) &= \Right\!p_*(\Left\!q^*\Line \otimes_{\Left} (\Weyl^{\alpha}\!\Tautological \boxtimes \Schur^{\alpha'}\!\Quotient^{\vee}) )\\
                                                                                                 &= \Right\!p_*(\Left\!q^*\Line \otimes_{\Left} \Left\!p^*\Weyl^{\alpha}\!\Tautological \otimes_{\Left} \Left\!q^*\Schur^{\alpha'}\!\Quotient^{\vee})\\ 
                                                                                                 &= \Weyl^{\alpha}\!\Tautological \otimes \Right\!p_* \Left\!q^*(\Line \otimes \Schur^{\alpha'}\!\Quotient^{\vee})\\
                                                                                                 &= \Weyl^{\alpha}\!\Tautological \otimes \Left\!\pi^*\Right\!\pi_*(\Line \otimes \Schur^{\alpha'}\!\Quotient^{\vee})\\
                                                                                                 &= \Phi'_{\alpha}(\Right\!\pi_*(\Line \otimes \Schur^{\alpha'}\!\Quotient^{\vee}))
    \end{align*}
    where we have used projection formula and flat base change with respect to diagram \ref{Diagram Product of Grassmannians}.
    On the other hand, $\Phi'_{\alpha}$ are fully faithful Fourier-Mukai Functors (lemma \ref{Lemma Cohomological properties of Phi})
    and their essential images are a semiorthogonal sequence (lemma \ref{Lemma Cohomological properties of Phi})
    of admissible subcategories (lemma \ref{Lemma Essential Images of Fourier Mukai Functors are admissible}). We have just 
    proved that $\Line$ is in the smallest strictly full triangulated subcategory containing $\{ \IM(\Phi'_{\alpha}) \}_{\alpha \in \overline{B_{k,n}}}$.
    So we can conclude by using lemma \ref{Lemma Generator contained in Semiorthogonal Sequence implies Semiorthogonal Decomposition}  since $\Line$ is a generator of $\Dqc(\Gr(k,n)_{R})$ 
    (lemma \ref{Lemma Ample Line Bundle Generator}). 
    \end{proof}

    \section{Generalized Severi Brauer Schemes} \label{s:gsb}

    \begin{Remark}\label{r:faithful}
        In what follows, we will make use of the fact that a Grassmannian $\Gr(k,n)_X$ admits a faithful action of ${\rm PGL}_n$. This is easily proved
        by looking at the action of ${\rm GL}_n$ on subbundles of ${\mathcal{O}_X}^n$.
    \end{Remark}

    \begin{Definition}
        \label{Definition Generalised Severi Brauer }
        An algebraic stack $Y$ over $X$ is called \emph{generalised Severi-Brauer stack} if it is fppf locally isomorphic to $\Gr(k,V)$ for 
        some $k$ and vector bundle $V$ on a fppf cover of $X$. This means that there is a 2-Cartesian diagram
        $$\begin{tikzcd}
            \Gr(k,V) \ar[r] \ar[d] & Y \ar[d] \\
            X'\ar[r] & X,
        \end{tikzcd}$$
        where $X'\to X$ is fppf.
    \end{Definition}
        
   We will use the notation $\SB(k,n)_X\to X$ for a generalised Severi-Brauer stack over $X$. The subscript $X$ is often dropped from the notation
   when it is clear from the context what the base stack $X$ is. 
   
   Given such a stack, we can form its \emph{gerbe of trivialisations}. This is algebraic stack with a morphism to $X$ that we denote by 
   $G(\SB(k,n)_X)\rightarrow X$. Given a point $t:\Spec(R)\rightarrow X$, the groupoid over $t$ has as objects pairs $(V,f)$ where 
   $V$ is a locally free sheaf on $\Spec(R)$ and 
   \[ f : (\SB(k,n))_R \rightarrow \Gr(k,V^{\vee})              \] 
 is an isomorphism. A morphism $( V, f)\rightarrow (V', f')$ is an isomorphism $V \rightarrow V'$ such that the obvious diagram commutes.
 In view of \ref{r:faithful}, this is a $\Gm$-gerbe. 

 It follows from the  definition of $G(\SB(k,n)_X)$ that there is a universal locally free sheaf $V$ of rank $n$ on $G(\SB(k,n)_X)$ and a 2-Cartesian diagram
 \begin{equation}\label[*]{Diagram Gerbe of Trivialisation}
   \begin{tikzcd}
    \Gr(k,V) \ar[d,"\pi'"]\ar[r,"q'"] & \SB(k,n)_X \ar[d,"\pi"]\\ 
    G(\SB(k,n)_X) \ar[r,"q"]& X.
   \end{tikzcd}
 \end{equation}
 It follows that $\Gr(k,V) \to \SB(k,n)_X$ is a $\Gm$-gerbe.

 In what follows, we will make use of twisted sheaves and gerbes corresponding to cohomology classes. See, for example, \cite{dejong} and \cite{lieblich}.

    Let 
   \[ q : G \rightarrow X \]
   be a $\mathbb{G}_m$-gerbe  on $X$ corresponding to a cohomology class $\beta$ in $\Cohomology^2(X,\mathbb{G_m})$.
   Given $d\in \Integers$,
   we have full subcategories $\Dqc(G,d)$ of $\Dqc(G)$ consisting of complexes with $d$-homogeneous cohomology (that is the action of 
   $\mathbb{G}_m$ on the cohomology sheaves are multiplication by $(-)^d$)\cite[Definition 4.2]{Bergh-Schurer-Decomposition-Of-Gerbes}.
    The quasi-coherent derived category 
   of $\beta$-twisted sheaves on $X$  is defined to be $\Dqc(G,1)$. It follows that the quasi-coherent derived category 
   of $\beta^d$-twisted sheaves on $X$ is equivalent to $\Dqc(G,d)$\cite[Corollary 5.8]{Bergh-Schurer-Decomposition-Of-Gerbes} and the quasi coherent derived category 
   of untwisted sheaves on $X$ are equivalent to $\Dqc(G,0)$. 
   
   \begin{Theorem}
    \label{Theorem properties of twisted sheaves}
    If 
    \[ q : G \rightarrow X \] is a $\mathbb{G}_m$-gerbe $G$ 
    corresponding to a cohomology class $\beta$ then:
    \begin{enumerate}
        \item  there is a decomposition of the derived category of sheaves on gerbes into derived categories of twisted sheaves
               on $X$ 
            \[    \prod_{d \in \Integers} \Dqc(G,d) \cong \Dqc(G)  \]
        \item if $\F_d$ and $\E_e$ are $\beta^d$ and $\beta^e$ twisted respectively then $\F_d \otimes \E_e$ and
        $\mathcal{H}om(\E_e, \F_d)$ are $\beta^{d+e}$ and $\beta^{d-e}$ twisted respectively.
        \item the map $\Left\!q^*$ is fully faithful with essential image $\Dqc(G,0)$.(as mentioned earlier untwisted sheaves
        corresponds to complexes whose cohomologies are homogeneous for the trivial action)
        \item given a map $\pi$ of algebraic stacks
          and a  2-Cartesian diagram 
           \begin{equation}
            \label{Diagram twisted sheaves functorial with respect to pullback}
            \begin{tikzcd}
            H \arrow[d, "\pi'"'] \arrow[r, "q'"] & Y \arrow[d, "\pi"]  \\
            G \arrow[r, "q"']                                                 & X                
            \end{tikzcd} 
            \end{equation}
        with $H$ a $\mathbb{G}_m$ gerbe. Then $\Left (\pi')^*$ sends $\Dqc(G,d)$ to $\Dqc(H,d)$. That is, pullback by $\pi$ sends
        $\beta^d$ twisted sheaves to $(\pi^*\beta)^d$ twisted sheaves.
    \end{enumerate}
   \end{Theorem}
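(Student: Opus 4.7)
The plan is to verify each of the four items by leveraging the structural theory of $\Gm$-gerbes developed in \cite{Bergh-Schurer-Decomposition-Of-Gerbes}, combined with standard adjunction and functoriality arguments. Most of the content is a translation of the inertia-grading formalism to the derived setting, and the bulk of the technical work has already been done in \emph{loc.\ cit.}, so large parts of the write-up will consist of assembling the correct citations.

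For (1), I would argue that the decomposition $\prod_{d\in\Integers}\Dqc(G,d)\cong\Dqc(G)$ is the derived reflection of the fact that the inertia of a $\Gm$-gerbe is $\Gm$, whose character lattice is $\Integers$; any object of $\Dqc(G)$ splits canonically into its weight-$d$ isotypical components under the inertia action. The clean statement needed here is essentially \cite[Corollary 5.8]{Bergh-Schurer-Decomposition-Of-Gerbes}, which I would cite directly. Statement (2) is local on an fppf trivialisation of $G$: on such a trivialisation the inertia acts diagonally on $\F\otimes \E$ and by conjugation on the internal Hom, yielding weights $d+e$ and $d-e$ respectively. One then checks that this behaviour is preserved under the descent identifying $\Dqc(G,d)$ with the derived category of $\beta^d$-twisted sheaves on $X$, which is again formal.

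For (3), the map $\Left\!q^{*}$ manifestly lands in $\Dqc(G,0)$ because any pulled-back complex carries the trivial inertia action. Full faithfulness I would deduce from the projection formula combined with $\Right\!q_{*}\mathcal{O}_G\simeq\mathcal{O}_X$, a defining feature of a $\Gm$-gerbe, via the standard argument $\Right\!q_{*}\Left\!q^{*}\F\simeq \F\otimes\Right\!q_{*}\mathcal{O}_G\simeq\F$. That the essential image exhausts $\Dqc(G,0)$ follows by fppf descent, since weight-$0$ complexes on $G$ carry trivial inertia and therefore descend along $q$. For (4), the 2-Cartesian square \ref{Diagram twisted sheaves functorial with respect to pullback} is the key input: the inertia on $H$ is the pullback of the inertia on $G$, so $\Left(\pi')^{*}$ automatically intertwines the two weight decompositions and sends the weight-$d$ component to the weight-$d$ component.

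The main obstacle I anticipate is not any single calculation but rather the bookkeeping needed to run the argument cleanly at the unbounded derived level: one must verify that the decomposition in (1) is a genuine product (rather than merely a direct sum), and that each $\Dqc(G,d)$ is a triangulated subcategory stable under the tensor, Hom, and pullback operations invoked in (2)--(4). These compatibilities are precisely what the framework of \cite{Bergh-Schurer-Decomposition-Of-Gerbes} is designed to supply, so the proof will largely reduce to an accurate invocation of their results.
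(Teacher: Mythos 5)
Your proposal is correct and takes essentially the same approach as the paper, which proves this theorem by simply citing four results from \cite{Bergh-Schurer-Decomposition-Of-Gerbes} (Theorem~5.4 for parts (1)--(2), Proposition~5.7 for part (3), and Theorem~5.6 for part (4)). Your sketches of the underlying ideas --- the inertia-weight grading, the local tensor/Hom weight computation, full faithfulness via the projection formula together with $\Right\!q_*\mathcal{O}_G\simeq\mathcal{O}_X$, and base-change compatibility of inertia --- correctly describe why those cited results hold, though the paper itself does not spell any of this out.
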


   \begin{proof}
    We quote the relevant theorems in \cite{Bergh-Schurer-Decomposition-Of-Gerbes}
    \begin{enumerate}
        \item \cite[Theorem 5.4]{Bergh-Schurer-Decomposition-Of-Gerbes}
        \item \cite[Theorem 5.4]{Bergh-Schurer-Decomposition-Of-Gerbes}
        \item \cite[Proposition 5.7]{Bergh-Schurer-Decomposition-Of-Gerbes}
        \item \cite[Theorem 5.6]{Bergh-Schurer-Decomposition-Of-Gerbes}
    \end{enumerate}
   \end{proof}

\begin{Theorem}
    \label{Theorem Decomposition of Derived Category of Generalised Severi Brauer Varieties}
   For a generalized Severi-Brauer stack $\SB(k,n)_X$ over an algebraic stack $X$  there exists 
    fully faithful functors
\[         \Psi_{\alpha} : \Dqc(G(\SB(k,n)),|\alpha|) \rightarrow \Dqc(\SB(k,n))                                \]
\[        \Psi_{\alpha}(\F) = \Right\!q'_*(\Left (\pi')^* C  \otimes \Weyl^{\alpha} \Tautological)    \]  
where the notation is the same as that in \ref{Diagram Gerbe of Trivialisation}. Moreover, there is a semiorthogonal decomposition
of $\Dqc(\SB(k,n))$ into right admissible categories
    \[      \Dqc(\SB(k,n)) = \langle \{ \IM(\Psi_{\alpha}) \}_{\alpha \in \overline{B_{k,n}}}   \rangle.       \]
    In particular, there is a semiorthogonal decomposition in terms of the derived categories of twisted sheaves on $X$.
\end{Theorem}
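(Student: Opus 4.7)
The plan is to combine the Grassmannian semiorthogonal decomposition of Theorem \ref{Theorem Derived Category of Grassmannian over algebraic stack for Dqc} with the weight decomposition of the derived category of a $\Gm$-gerbe. First I would apply Theorem \ref{Theorem Derived Category of Grassmannian over algebraic stack for Dqc} to the universal rank-$n$ vector bundle $V$ on the stack $G(\SB(k,n))$, obtaining a semiorthogonal decomposition
$$\Dqc(\Gr(k,V)) = \langle \IM(\Phi_\alpha) \rangle_{\alpha \in \overline{B_{k,n}}}, \qquad \Phi_\alpha(\E) = \Left\!(\pi')^*\E \otimes \Weyl^\alpha \Tautological.$$
At the same time, the morphism $q' : \Gr(k,V) \to \SB(k,n)$ is a $\Gm$-gerbe (being a base change of $q$ along $\pi$), so Theorem \ref{Theorem properties of twisted sheaves}(1) provides a product decomposition $\Dqc(\Gr(k,V)) = \prod_{d \in \Integers} \Dqc(\Gr(k,V), d)$, with the weight-$0$ factor identified with $\Dqc(\SB(k,n))$ via $\Left\!q'^*$ by part (3) of that theorem.

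The next step is to verify that the first decomposition is homogeneous with respect to this weight grading. The universal $V$ on $G(\SB(k,n))$ is of homogeneous weight by construction, so $\Left\!(\pi')^* V$ and its tautological subsheaf $\Tautological$ are homogeneous of the same weight on $\Gr(k,V)$; since $\Weyl^\alpha\Tautological$ is a subquotient of $\Tautological^{\otimes |\alpha|}$ it is itself homogeneous, of weight proportional to $|\alpha|$. Combined with parts (2) and (4) of Theorem \ref{Theorem properties of twisted sheaves}, this gives
$$\Phi_\alpha\bigl(\Dqc(G(\SB(k,n)), d)\bigr) \subset \Dqc(\Gr(k,V), d - |\alpha|)$$
for the sign convention matching the statement. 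Since each $\IM(\Phi_\alpha)$ therefore decomposes into its homogeneous weight pieces, the semiorthogonal decomposition on $\Dqc(\Gr(k,V))$ restricts, weight by weight, to a semiorthogonal decomposition of each factor $\Dqc(\Gr(k,V), e)$.

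Specializing to $e = 0$ and transporting the resulting decomposition along $\Left\!q'^* : \Dqc(\SB(k,n)) \simeq \Dqc(\Gr(k,V), 0)$, with quasi-inverse $\Right\!q'_*$, produces exactly the functors $\Psi_\alpha = \Right\!q'_* \circ \Phi_\alpha|_{\Dqc(G(\SB(k,n)), |\alpha|)}$ appearing in the statement. The final clause, that the decomposition is expressed in terms of twisted sheaves on $X$, then follows by identifying $\Dqc(G(\SB(k,n)), d)$ with the derived category of $\beta^d$-twisted sheaves on $X$ (where $\beta \in \Cohomology^2(X, \Gm)$ is the class of the gerbe), as recalled in the discussion preceding the theorem.

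The main obstacle is pinning down the sign convention so that the weight of $\Weyl^\alpha\Tautological$ matches the $|\alpha|$ appearing in the statement; this requires carefully unwinding the definition of $G(\SB(k,n))$ and the induced $\Gm$-action on $V$. Once that is settled, the remaining verifications (restriction of a semiorthogonal decomposition along an orthogonal product decomposition, and the compatibility of pullback and tensor product with weights) are formal consequences of Theorem \ref{Theorem properties of twisted sheaves}.
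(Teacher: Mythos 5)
Your proposal is correct and follows essentially the same route as the paper: apply the Grassmannian decomposition of Theorem \ref{Theorem Derived Category of Grassmannian over algebraic stack for Dqc} to $\Gr(k,V)$ over the gerbe of trivialisations, observe via Theorem \ref{Theorem properties of twisted sheaves} that each $\Phi_\alpha$ shifts $\Gm$-weight by $-|\alpha|$, restrict the decomposition to the weight-zero component, and transport it to $\Dqc(\SB(k,n))$ via $\Right q'_*$. The one place the paper is more precise is the ``restricts weight by weight'' step, which it justifies by citing \cite[Corollary 5.19]{Bergh-Schnurer-Conservative-Descent} rather than asserting it, and it also fixes the sign by noting that $V$ has weight $1$, so $\Tautological\subset(\pi')^*V^\vee$ has weight $-1$ and $\Weyl^\alpha\Tautological$ has weight $-|\alpha|$, pinning down the $d=|\alpha|$ condition you left as a sign to be resolved.
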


\begin{proof}
    We will make use of the notation in (\ref{Diagram Gerbe of Trivialisation}).

    By definition of the $\mathbb{G}_m$-action on $G(\SB(k,n)_X)$, the universal bundle $V$ is in $\Dqc(G(\SB(k,n)_X),1)$ and hence $(\pi')^*(V^{\vee})$ is in
     $\Dqc(\Gr(k,V^{\vee}),-1)$
    by Theorem \ref{Theorem properties of twisted sheaves} Part(3). 
    Hence $(V^{\vee})^{\vert \alpha \vert}$ is in 
    $\Dqc(\Gr(k,\F^{\vee}),-\vert \alpha \vert)$ by Theorem  \ref{Theorem properties of twisted sheaves} Part(2) . 
    Since $\Tautological^{\vert \alpha \vert}$
     is a subsheaf of $(V^{\vee})^{\vert \alpha \vert}$, it is in $\Dqc(\Gr(k,\F^{\vee}),-\vert \alpha \vert)$ as well.
     Moreover, the surjection in equation \ref{Equation Weyl Functors} 
    \[          \Tautological^{\vert \alpha \vert} \rightarrow \Weyl^{\alpha} \Tautological.              \]
    proves $\Weyl^{\alpha} \Tautological$ is in $\Dqc(\Gr(k,\F^{\vee}),-\vert \alpha \vert)$.

    For $C$ in $\Dqc(G(\SB(k,n)_X),d)$, the complex
    $\Left (\pi')^*(C) \otimes \Weyl^{\alpha} \Tautological $ is in $\Dqc(\Gr(k,\VectorV^{\vee}),0)$ if and only if $d$ = $|\alpha|$ by
    theorem \ref{Theorem properties of twisted sheaves} part(2) and part(3). 
    We let $\Phi_\alpha(C):=\Left (\pi')^* C  \otimes \Weyl^{\alpha} \Tautological$.
    By theorem
     \ref{Theorem Derived Category of Grassmannian over algebraic stack for Dqc}, the functors $\Phi_{\alpha}$ are fully faithful and 
     have right adjoints. 
      Moreover, their restrictions 
    \[ \Phi_{\alpha} :   \Dqc(G(\SB(k,n)_X),|\alpha|) \rightarrow   \Dqc(\Gr(k,\VectorV^{\vee}),0)                                     \]   
    admit right adjoints by Theorem \ref{Theorem properties of twisted sheaves} part(1). 
    
    The semiorthogonal decomposition in 
    Theorem \ref{Theorem Derived Category of Grassmannian over algebraic stack for Dqc}
    \[    \Dqc(\Gr(k,\VectorV)) = \langle \{ \IM(\Phi_{\alpha}) \}_{\alpha \in \overline{B_{k,n}}}   \rangle                                                      \] 
    combined with \cite[Corollary 5.19]{Bergh-Schnurer-Conservative-Descent} implies the semiorthogonal decomposition
    \[ \Dqc(\Gr(k,\VectorV^{\vee}),0) =  \langle \{ \IM(\Phi_{\alpha}|_{\Dqc(G,|\alpha|)}) \}_{\alpha \in \overline{B_{k,n}}}   \rangle.  \] 
    Now $\Left (q')^*$ is a fully faithful functor onto $\Dqc(\Gr(k,\VectorV^{\vee}),0)$ by 
    theorem \ref{Theorem properties of twisted sheaves}. Hence $\Right\!q'_*$ restricted to $\Dqc(\Gr(k,\VectorV^{\vee}),0)$ is 
    fully faithful with essential image $\Dqc(\SB(k,n))$. Thus, we get a semiorthogonal decomposition 
    \[        \Dqc(\SB(k,n)) = \langle \{ \IM(\Psi_{\alpha}) \}_{\alpha \in \overline{B_{k,n}}}   \rangle                     \]
\end{proof}

\begin{Corollary}
    \label{Corollary Derived Category of Severi Brauer over algebraic stack for D perfect}
    For a generalized Severi-Brauer stack
    \[     \pi :  \SB(k,n) \rightarrow X,                                         \]
    there is a semiorthogonal decomposition of the derived categories $\D_p(\SB(k,n))$, $\D^{lb}_{pc}(\SB(k,n))$ and
    $\D_{sg}(\SB(k,n))$ into right
    admissible subcategories given by essential images of the induced fully faithful functors (for $\alpha \in \overline{B_{k,n}}$)
    \[   \Psi_{\alpha}^p : \D_p(G(\SB(k,n)),|\alpha|) \rightarrow \D_p(\SB(k,n))  \] 
    \[   \Psi_{\alpha}^{pc} :  \D^{lb}_{pc}(G(\SB(k,n)),|\alpha|) \rightarrow \D^{lb}_{pc}(\SB(k,n)) \]
    \[   \Psi_{\alpha}^{sg} :  \D_{sg}(G(\SB(k,n)),|\alpha|) \rightarrow \D_{sg}(\SB(k,n))           \]                             
    respectively.                                                        
\end{Corollary}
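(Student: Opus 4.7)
The plan is to imitate the proof of Corollary~\ref{Corollary Derived Category of Grassmannian over algebraic stack for D perfect} by applying Lemma~\ref{Lemma semiorthogonal decomposition of Dqc gives semiorthogonal decomposition of other derived categories} to the decomposition produced in Theorem~\ref{Theorem Decomposition of Derived Category of Generalised Severi Brauer Varieties}.

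First I would verify that each $\Psi_\alpha$ arises from a finite Fourier--Mukai transform in the sense of Definition~\ref{Definition Fourier Mukai Transform}. Using the $2$-Cartesian square preceding the theorem, the natural candidate is the triple $(\pi',q',\Weyl^\alpha \Tautological)$: the projection $\pi'$ is a Grassmannian bundle, hence proper, flat, of finite presentation and perfect; the morphism $q'$ is flat and finite with $(q')^{\times}\mathcal{O}_{\SB(k,n)}$ perfect; and the kernel $\Weyl^\alpha \Tautological$ is a vector bundle on $\Gr(k,V)$ by Lemma~\ref{Lemma Schur and Weyl are vector bundles}, hence perfect. Fullness of $\Psi_\alpha$ on the $|\alpha|$-homogeneous piece was already established in the proof of Theorem~\ref{Theorem Decomposition of Derived Category of Generalised Severi Brauer Varieties}. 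With these inputs in hand, the cited lemma immediately furnishes the claimed semiorthogonal decompositions of $\D_p(\SB(k,n))$, $\D^{lb}_{pc}(\SB(k,n))$ and $\D_{sg}(\SB(k,n))$ into the essential images of $\Psi_\alpha^p$, $\Psi_\alpha^{pc}$ and $\Psi_\alpha^{sg}$.

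The main obstacle is that $q'$ is a $\Gm$-gerbe and therefore not representable in the strict sense demanded by Definition~\ref{Definition Fourier Mukai Transform}, so the lemma is not applicable verbatim to the composite $\Psi_\alpha$. A clean workaround is to argue in two stages: first apply Corollary~\ref{Corollary Derived Category of Grassmannian over algebraic stack for D perfect} to the Grassmannian bundle $\Gr(k,V)\to G(\SB(k,n))$ in order to transfer the semiorthogonal decomposition from $\Dqc$ to $\D_p$, $\D^{lb}_{pc}$ and $\D_{sg}$ on $\Gr(k,V)$, and then descend along the gerbe $q'$ using the same gerbe decomposition machinery \cite[Corollary 5.19]{Bergh-Schnurer-Conservative-Descent} that was used to conclude the theorem, noting that $\Right q'_*$ restricts to an equivalence between the $0$-homogeneous piece of each of these smaller derived categories on $\Gr(k,V)$ and the corresponding derived category on $\SB(k,n)$, and therefore intertwines the decomposition indexed by $\overline{B_{k,n}}$ term by term with the one on $\SB(k,n)$. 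The compatibility of the Weyl-functor weights with the homogeneous decomposition, recorded in the proof of Theorem~\ref{Theorem Decomposition of Derived Category of Generalised Severi Brauer Varieties} via Theorem~\ref{Theorem properties of twisted sheaves}, ensures that the index $|\alpha|$ appearing in the statement of the corollary is the correct one at each flavor of derived category.
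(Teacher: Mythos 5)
Your final two-stage argument --- apply Corollary~\ref{Corollary Derived Category of Grassmannian over algebraic stack for D perfect} to the Grassmannian bundle $\Gr(k,V)\to G(\SB(k,n))$ and then descend along the $\Gm$-gerbe $q'$ via \cite[Corollary 5.19]{Bergh-Schnurer-Conservative-Descent}, using that $\Right q'_*$ is an equivalence on the $0$-homogeneous pieces --- is exactly the paper's proof, and your identification of why $q'$ falls outside Definition~\ref{Definition Fourier Mukai Transform} (it is a gerbe, not representable) correctly explains why the direct route of your first paragraph must be abandoned. The proposal is correct and takes essentially the same approach as the paper.
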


\begin{proof}
    This follows from Corollary \ref{Corollary Derived Category of Grassmannian over algebraic stack for D perfect} and 
    \cite[Corollary 5.19]{Bergh-Schnurer-Conservative-Descent}.
\end{proof}

\section{Applications}\label{s:flag}

We first prove that a flag variety over an algebraic stack has a  semiorthogonal decomposition. Let $\VectorV$ be a rank $n$ vector
bundle on an algebraic stack $X$. Given integers $1$ $\leq$ $i_{1}$ $<$ $i_{2}$ $<$ $\hdots$ $<$ $i_{k}$ = $n$. We denote by
 $\Fl(i_1,i_2 \hdots i_{k-1},\VectorV)_X$ the partial flag variety over $X$ parameterizing flags
\[ 0 \subset \VectorW_1 \subset \VectorW_2 \hdots \subset \VectorW_{k-1} \subset \VectorW_k = \VectorV \] 
such that $\VectorW_j$ is a vector bundle over $X$ of rank $i_j$. For each integer $j$ satisfying 1 $\leq$ $j$ $<$ $k$,
we have tautological bundles $S_j$ over $\Fl(i_1,i_2, \hdots i_{k-1},\VectorV)_X$. On the other hand, let 
$\overline{B_{i_1,i_2 \hdots i_{k-1}, n}}$ be the set of tuples $(\alpha_1, \alpha_2, \hdots, \alpha_{k-1})$ where each 
$\alpha_j$ is a partition such that the corresponding Young diagram has less than $i_j$ rows and has no more than
$i_{j+1} - i_{j}$ column. We equip it with the dictionary order.

\begin{Corollary}
	\label{Corollary Semiorthogonal Decomposition of Flags}
    Let $\Fl(i_1,i_2, \hdots i_{k-1},\VectorV)_X$ be the flag variety over an algebraic stack $X$
    \[   \pi :  \Fl(i_1,i_2, \hdots i_{k-1},\VectorV)_X  \rightarrow X                 \]
    Then there exists fully faithful Fourier Mukai functors
    \[      \Phi_{\alpha_1,\alpha_2 \hdots \alpha_{k-1}} : \Dqc(X) \rightarrow \Fl(i_1,i_2, \hdots i_{k-1},\VectorV)_X                                                        \]
    \[      \Phi_{\alpha_1,\alpha_2 \hdots \alpha_{k-1}}(\E) =  \Left\!\pi^*(\E) \otimes \Weyl^{\alpha_1}\!\Tautological_1 \otimes \Weyl^{\alpha_2}\!\Tautological_2 \hdots \otimes \Weyl^{\alpha_{k-1}}\!\Tautological_{k-1}                                             \]
    such that there is a semiorthogonal decomposition of $\Dqc(\Fl(i_1,i_2, \hdots i_{k-1},\VectorV)_X)$ into right admissible categories
    \[      \Dqc(\Gr(k,\VectorV)_{X}) = \langle \{ \IM(\Phi_{\alpha_1,\alpha_2 \hdots \alpha_{k-1}}) \}_{(\alpha_1,\alpha_2 \hdots \alpha_{k-1}) \in \overline{B_{i_1,i_2 \hdots i_{k-1}, n}}}   \rangle                    \]
    The induced functors also give a semiorthogonal decomposition of $$\D_p(\Fl(i_1,i_2, \hdots i_{k-1},\VectorV)_X),\ 
    \D^{lb}_{pc}(\Fl(i_1,i_2, \hdots i_{k-1},\VectorV)_X)\ \text{and}\ \D_{sg}(\Fl(i_1,i_2, \hdots i_{k-1},\VectorV)_X).$$   
\end{Corollary}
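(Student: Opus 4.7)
The plan is induction on $k$, the number of steps in the flag. The base case $k=2$ is a single Grassmannian $\Gr(i_1,\VectorV)_X$, which is Theorem \ref{Theorem Derived Category of Grassmannian over algebraic stack for Dqc}. For the inductive step, I would present the flag variety as an iterated Grassmann bundle. Let $\rho : Y \to Z$ denote the forgetful morphism from $Y := \Fl(i_1,\ldots,i_{k-1},\VectorV)_X$ to $Z := \Gr(i_{k-1},\VectorV)_X$ that sends a flag to its top piece, and let $U$ denote the tautological rank-$i_{k-1}$ subbundle on $Z$. Then $Y$ is canonically identified with the relative partial flag variety $\Fl(i_1,\ldots,i_{k-2},U)_Z$, and under this identification the tautological bundles $\Tautological_j$ on $Y$ for $j\leq k-2$ are the universal subbundles of this relative flag, while $\Tautological_{k-1} \cong \Left\!\rho^* U$.

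By the inductive hypothesis applied to $\rho$ (with base stack $Z$), I obtain
\[ \Dqc(Y) = \langle \IM(\Phi^Z_{\alpha_1,\ldots,\alpha_{k-2}}) : (\alpha_1,\ldots,\alpha_{k-2})\in \overline{B_{i_1,\ldots,i_{k-2},i_{k-1}}} \rangle, \]
with $\Phi^Z_{\alpha_1,\ldots,\alpha_{k-2}}(\F) = \Left\!\rho^*\F \otimes \Weyl^{\alpha_1}\!\Tautological_1 \otimes \cdots \otimes \Weyl^{\alpha_{k-2}}\!\Tautological_{k-2}$. Theorem \ref{Theorem Derived Category of Grassmannian over algebraic stack for Dqc} applied to $\pi_Z : Z \to X$ supplies
\[ \Dqc(Z) = \langle \IM(\Psi_{\alpha_{k-1}}) : \alpha_{k-1}\in \overline{B_{i_{k-1},n}} \rangle, \]
with $\Psi_{\alpha_{k-1}}(\E) = \Left\!\pi_Z^*\E \otimes \Weyl^{\alpha_{k-1}}\! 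U$. Since each $\Phi^Z_\beta$ is fully faithful and admits a right adjoint, I would refine each block of the outer decomposition by applying $\Phi^Z_\beta$ to the inner decomposition of $\Dqc(Z)$, obtaining
\[ \Dqc(Y) = \langle \IM(\Phi^Z_\beta\circ\Psi_{\alpha_{k-1}}) : (\beta,\alpha_{k-1}) \in \overline{B_{i_1,\ldots,i_{k-1},n}} \rangle \]
in dictionary order.

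A direct calculation using the projection formula and the identification $\Left\!\rho^* U \cong \Tautological_{k-1}$ shows that the composite $\Phi^Z_\beta\circ\Psi_{\alpha_{k-1}}$ sends $\E$ to $\Left\!\pi^*\E \otimes \Weyl^{\alpha_1}\!\Tautological_1 \otimes \cdots \otimes \Weyl^{\alpha_{k-1}}\!\Tautological_{k-1}$, matching the claimed formula for $\Phi_{\alpha_1,\ldots,\alpha_{k-1}}$. This composite is a fully faithful Fourier--Mukai functor with perfect kernel, since $\pi : Y \to X$ is smooth and proper and the kernel is a tensor product of Weyl functors applied to tautological subbundles. The analogous decompositions of $\D_p(Y)$, $\D^{lb}_{pc}(Y)$ and $\D_{sg}(Y)$ then follow from Lemma \ref{Lemma semiorthogonal decomposition of Dqc gives semiorthogonal decomposition of other derived categories}. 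The main obstacle is essentially bookkeeping: checking that iterating the Grassmannian decomposition produces precisely the dictionary order on $\overline{B_{i_1,\ldots,i_{k-1},n}}$ and that the kernel of the composite assumes the stated tensor-product form. No derived-categorical ingredient beyond the Grassmannian case is required.
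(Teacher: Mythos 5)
Your argument is correct and is essentially the paper's proof written out in full: the paper's own justification is the single observation that $\Fl(i_1,\hdots,i_{k-1},\VectorV)_X$ is a tower of Grassmannian bundles, so that Theorem \ref{Theorem Derived Category of Grassmannian over algebraic stack for Dqc} and Corollary \ref{Corollary Derived Category of Grassmannian over algebraic stack for D perfect} can be iterated. Your induction on $k$, the identification of the composite Fourier--Mukai kernel, and the check that the refinement produces the dictionary order are exactly the bookkeeping the paper leaves implicit.
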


\begin{proof}
	Follows from theorem \ref{Theorem Grassmannian in arbitrary characteristic} and 
    corollary \ref{Corollary Derived Category of Grassmannian over algebraic stack for D perfect} since 
    $\Fl(i_1,i_2, \hdots i_{k-1},\VectorV)_X$ is a tower of Grassmannian bundle.
\end{proof}

\begin{Corollary}
	\label{Corollary Generalised Severi Brauer Over a Field}
	For a generalized Severi-Brauer variety over a field
	\[ \pi : \SB(k,n) \to K \]
    Let  $B$ be a central simple algebra representing the class $\beta$ in $\Br(K)$
    where $\beta$ is the class of $\SB(k,n)$. Then $D^{b}(\SB(k,n))$ has a semiorthogonal decomposition
    such that each component is equivalent to the derived category $D^{b}(B^{\otimes i})$  
    of bounded complexes of finitely generated modules over $B^{\otimes i}$ where $i$ $\in$ $[n]$.
    Moreover, the number of times a component equivalent to $D^{b}(B^{\otimes i})$ occurs is equal to the
    number of partitions of $i$. 
\end{Corollary}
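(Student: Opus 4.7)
The plan is to specialise Theorem \ref{Theorem Decomposition of Derived Category of Generalised Severi Brauer Varieties} (with $X=\Spec(K)$) and then translate each gerbe piece into a module category over a tensor power of $B$. First I would observe that $\SB(k,n)$ is smooth and projective over $K$, so $\Db(\SB(k,n))=\D_p(\SB(k,n))$; likewise for $\Spec(K)$ with a fixed Brauer class, bounded coherent complexes coincide with perfect complexes. Thus Corollary \ref{Corollary Derived Category of Severi Brauer over algebraic stack for D perfect} already yields a semiorthogonal decomposition of $\Db(\SB(k,n))$ indexed by $\alpha\in\overline{B_{k,n}}$ whose $\alpha$-th component is $\D_p(G(\SB(k,n)),|\alpha|)$.

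Next I would identify each piece with the derived category of modules over a central simple algebra. By construction, $G(\SB(k,n))\to\Spec(K)$ is a $\Gm$-gerbe whose cohomology class in $\Cohomology^2(\Spec(K),\Gm)=\Br(K)$ is exactly the Brauer class $\beta$ of $\SB(k,n)$; this is the defining feature of the gerbe of trivialisations, because a trivialisation of the gerbe over a $K$-algebra $R$ is precisely a free module $V$ producing an isomorphism $\SB(k,n)_R\cong\Gr(k,V^\vee)$. Invoking Theorem \ref{Theorem properties of twisted sheaves}(1) (and part (2) for the tensor-power statement), the weight-$d$ subcategory $\Dqc(G(\SB(k,n)),d)$ is equivalent to the derived category of $\beta^d$-twisted quasi-coherent sheaves on $\Spec(K)$. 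Over a field this is just modules over any central simple $K$-algebra representing $\beta^d$; since $B^{\otimes d}$ represents $\beta^d$ and is Morita equivalent to the underlying division algebra, I obtain $\D_p(G(\SB(k,n)),d)\simeq \Db(B^{\otimes d})$.

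Assembling the two steps, the semiorthogonal decomposition becomes
\[ \Db(\SB(k,n)) = \langle \{\Db(B^{\otimes|\alpha|})\}_{\alpha\in\overline{B_{k,n}}}\rangle, \]
and the multiplicity of $\Db(B^{\otimes i})$ equals the number of $\alpha\in B_{k,n}$ with $|\alpha|=i$, i.e.\ the number of partitions of $i$ fitting in a $k\times (n-k)$ box (which for $i$ in the stated range agrees with the total number of partitions of $i$).

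The substantive step is the Brauer-class identification: verifying that the class of the $\Gm$-gerbe $G(\SB(k,n))\to \Spec(K)$ is $\beta$, and that the twisted-sheaves equivalence of \cite{Bergh-Schurer-Decomposition-Of-Gerbes} converts weight-$d$ objects on this gerbe into $B^{\otimes d}$-modules. Everything else — fully-faithfulness, admissibility, and the indexing by $\overline{B_{k,n}}$ — is inherited from the preceding theorem, and boundedness/coherence are trivial in the smooth projective setting.
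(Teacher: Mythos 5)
Your proposal is correct and follows essentially the same route as the paper, whose own proof is a single sentence invoking Corollary \ref{Corollary Derived Category of Severi Brauer over algebraic stack for D perfect} together with the identification of $\beta^{i}$-twisted coherent sheaves on $K$ with finite $B^{\otimes i}$-modules. Your explicit handling of the multiplicity count (partitions of $i$ fitting in a $k\times(n-k)$ box, rather than all partitions of $i$) is in fact more precise than the literal wording of the corollary itself.
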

\begin{proof}
By corollary \ref{Corollary Derived Category of Severi Brauer over algebraic stack for D perfect}  if $B$ represents $\beta$,
finite modules on $B^{i}$ are the $\beta^{i}$ twisted coherent sheaves on $K$. 
\end{proof}

\begin{Corollary}
	\label{Corollary Non representability of Generalised Severi Brauer} The
bounded derived category of a finite product of generalized Severi-Brauer varieties over
a field  admits a full exceptional collection if and only if it
splits as a finite product of Grassmannians.
\end{Corollary}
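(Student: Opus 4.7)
The plan is to handle the two directions separately. For the easy direction, if each factor $\SB(k_i,n_i)$ is a Grassmannian $\Gr(k_i,n_i)_K$, then Theorem \ref{Theorem Grassmannian in arbitrary characteristic} supplies a full exceptional collection on each factor, and the external tensor product assembles these into a full exceptional collection on the product.

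For the nontrivial direction, write $X = \SB(k_1,n_1)\times\cdots\times\SB(k_m,n_m)$, and let $B_i$ be a central simple algebra representing the Brauer class of $\SB(k_i,n_i)$. First, I would apply Corollary \ref{Corollary Generalised Severi Brauer Over a Field} to each factor and take the external product of the resulting semiorthogonal decompositions to obtain an SOD of $D^b(X)$ indexed by tuples $\vec\alpha = (\alpha_1,\ldots,\alpha_m) \in \prod_i \overline{B_{k_i,n_i}}$, whose $\vec\alpha$-component is equivalent to $D^b(A_{\vec\alpha})$ where
\[ A_{\vec\alpha} = B_1^{\otimes|\alpha_1|}\otimes_K\cdots\otimes_K B_m^{\otimes|\alpha_m|}. \]

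The key invariant is the determinant $\det(\chi)$ of the Euler pairing $\chi(\E,\F) = \sum_i (-1)^i \dim_K \operatorname{Ext}^i(\E,\F)$ on $K_0(X)$. This is a well-defined integer, since a change of $\mathbb Z$-basis alters the Gram matrix determinant by the square of a unit, hence by $1$. A full exceptional collection supplies a basis of $K_0(X)$ in which the Gram matrix is unit upper triangular, so $\det(\chi) = 1$. Computing $\det(\chi)$ from the SOD above, $K_0(X)$ decomposes as $\bigoplus_{\vec\alpha} \mathbb Z \cdot [S_{\vec\alpha}]$ where $S_{\vec\alpha}$ is the unique simple $A_{\vec\alpha}$-module; since each $A_{\vec\alpha}$ is a semisimple $K$-algebra all higher Ext's vanish, giving $\chi([S_{\vec\alpha}], [S_{\vec\alpha}]) = \dim_K \operatorname{End}_{A_{\vec\alpha}}(S_{\vec\alpha}) = \operatorname{ind}(A_{\vec\alpha})^2$, while semiorthogonality forces the Gram matrix to be block upper triangular in the SOD ordering. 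Hence $\det(\chi) = \prod_{\vec\alpha} \operatorname{ind}(A_{\vec\alpha})^2$, and equality with $1$ forces $\operatorname{ind}(A_{\vec\alpha}) = 1$ for every $\vec\alpha$. Specializing to the tuple with a single entry $(1)$ in position $i$ and the empty partition elsewhere yields $A_{\vec\alpha} = B_i$, so each $B_i$ has trivial Brauer class and $\SB(k_i,n_i)\cong\Gr(k_i,n_i)_K$.

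The main obstacle I expect is the construction of the product SOD with pieces equivalent to $D^b(A_{\vec\alpha})$. While external products of SODs for products of varieties are standard, here the individual pieces are derived categories of modules over central simple algebras (coming from twisted sheaves), and one must verify that the external tensor product of two such pieces corresponds to modules over the tensor product of the CSAs. This can be done either iteratively, by applying Theorem \ref{Theorem Decomposition of Derived Category of Generalised Severi Brauer Varieties} with a product of smaller Severi-Brauer stacks as the base, or by a direct Künneth-type argument; in either case the bookkeeping requires care but is routine.
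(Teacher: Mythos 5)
Your argument is correct in outline, but it takes a genuinely different route from the paper on the hard direction. The paper deduces from a full exceptional collection that $D^b(X)$ is categorically representable in dimension $0$ (citing Auel--Bernardara) and then invokes Novakovi\'c's results (Proposition 4.5 and Remark 4.6 of the cited paper), whose proofs go through noncommutative motives. You instead use the discriminant of the Euler form on $K_0(X)$: a full exceptional collection gives a unit upper-triangular Gram matrix, hence discriminant $1$, while the basis of simple modules coming from the semiorthogonal decomposition gives discriminant $\prod_{\vec\alpha}\operatorname{ind}(A_{\vec\alpha})^2$, forcing every index to be $1$; this is elementary and self-contained, needing only $K_0$ of semisimple algebras and the invariance of the Gram determinant under change of $\mathbb Z$-basis. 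The trade-off is that your route, like the paper's, still hinges on the one genuinely nontrivial input: producing the semiorthogonal decomposition of $D^b(\SB(k_1,n_1)\times\cdots\times\SB(k_m,n_m))$ with components identified as $D^b(B_1^{\otimes|\alpha_1|}\otimes_K\cdots\otimes_K B_m^{\otimes|\alpha_m|})$. The paper outsources exactly this to Novakovi\'c; you flag it as routine bookkeeping, and while the iterated relative construction (applying Theorem \ref{Theorem Decomposition of Derived Category of Generalised Severi Brauer Varieties} over the partial product as base and tracking how the pulled-back twists multiply) does work, the identification of the external product of two components $D^b(A)\boxtimes D^b(B)\simeq D^b(A\otimes_K B)$ is the step that actually needs a proof rather than a gesture, so it should be written out or given a precise reference. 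Subject to that, your argument is sound; its advantage is avoiding noncommutative motives entirely, while the paper's stronger input (non-representability in dimension $0$) rules out more than just full exceptional collections.
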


\begin{proof}
	Corollary \ref{Corollary Derived Category of Grassmannian over algebraic stack for D perfect}
    proves that a finite product of Grassmannians admits an exceptional collection, see \cite[Proposition 4.5]{Nov} for further details. 
    
    For the reverse implication, 
    suppose a finite product of generalized Severi-Brauer varieties $X$ admits a full exceptional collection.
    Then it's bounded derived category of coherent sheaves is  representable in dimension 0, see \cite[Definition 1.13, Lemma 1.19]{AB}. 
    Now corollary \ref{Corollary Generalised Severi Brauer Over a Field} gives
    us a collection that satisfies the conditions in \cite[Proposition 4.5]{Nov}. This lemma follows from \cite[Remark 4.6]{Nov}.
\end{proof}

\end{document}